 \newtheorem{thm}{Theorem}[section]
 \newtheorem{thmA}{Theorem}
 \newtheorem{cor}{Corollary}
 \newtheorem{lemma}[thm]{Lemma}
 \newtheorem{prop}[thm]{Proposition}
 \newtheorem{propA}[thmA]{Proposition}
 \theoremstyle{definition}
 \newtheorem{defn}[thm]{Definition}
 \newtheorem{exmp}[thm]{Example}
\newtheorem{notation}[thm]{Notation}
 \theoremstyle{remark}
 \newtheorem{rem}[thm]{Remark}
 \newtheorem{conj}[thm]{Conjecture}
 \numberwithin{equation}{subsection}
\newcommand{\FF}{\text{$\mathcal{F}$}}
\newcommand{\NN}{\text{$\mathcal{N}$}}
\newcommand{\ks}{{\text{\rm ks}}}
\newcommand{\intr}{\operatorname{int}}
\newcommand{\Lip}{\operatorname{Lip}}
        \newcommand{\field}[1]{\text{$\mathbb{#1}$}}
        \newcommand{\N}{\field{N}}
        \newcommand{\Z}{\field{Z}}
        \newcommand{\R}{\field{R}}
        \newcommand{\C}{\field{C}}
\newdimen\theight
\def\TeXref#1{%
             \leavevmode\vadjust{\setbox0=\hbox{{\tt
                     \quad\quad  {\small \textrm #1}}}%
             \theight=\ht0
             \advance\theight by \lineskip
             \kern -\theight \vbox to
             \theight{\rightline{\rlap{\box0}}%
             \vss}%
             }}%
\begin{document}

\title{Exotic open $4$-manifolds which are non-leaves}

\author{Carlos Meni\~no Cot\'on}
\author{Paul A. Schweitzer, S.J.}

\address{Departamento de An\'alise, Instituto de Matem\'atica e Estat\'istica\\
	Universidade Federal Fluminense\\
	Mario Santos Braga S/N, Niteroi, Rio de Janeiro 21941-916, Brazil.}

\email{carlos.meninho@gmail.com}

\address{Departamento de Matematica\\
         Pontificia Universidade Cat\'olica do Rio de Janeiro\\
         Marques de S\~ao Vicente 225, Gavea, Rio de Janeiro 22453-900, Brazil. }

\email{paul37sj@gmail.com}

\thanks{The first author wants to thank the Fundaci\'on Pedro Barri\'e de la Maza, Postdoctoral fellow (2013--2015) and the CAPES postdoc program (2015--2016) for their support.} 
\thanks{This work was also supported by MICINN, Grant MTM2014-56950-P Spain (2014--2017)}

\begin{abstract}
We study the possibility of realizing exotic smooth structures on punctured simply connected $4$-manifolds as leaves of a codimension one foliation on a compact manifold. In particular, we show the existence of uncountably many smooth open $4$-manifolds which are not diffeomorphic to any leaf of a codimension one transversely  $C^{2}$ foliation on a compact manifold. These examples include some exotic $\R^4$'s and exotic cylinders $S^3\times\R$.
\end{abstract}

\maketitle

\section*{Introduction}
The stunning results of Donaldson~\cite{Donaldson} and Freedman~\cite{Freedman} provided the existence of exotic smooth structures on $\R^4$, which is known to be the unique euclidean space with this property. This is in fact also true \cite{Bizaca-Etnyre} for an open $4$-manifold with a collarable end. The fact that these structures can arise in $4$-dimensional manifolds has implications for physics (see e.g. \cite{AsselmeyerMaluga-Brans,Krol}), i.e., what if our space-time carries an exotic structure? Since the exotic family was discovered in the $1980s$, nobody has been able to find an explicit and useful exotic atlas. It is worthy of interest to obtain alternative explicit descriptions of these exotica.

An open manifold which is realizable as a leaf of a foliation in a compact manifold must satisfy some restrictions. Since the ambient manifold is compact, an open manifold has to accumulate somewhere, and this induces recurrence and ``some periodicity'' on its ends.

Before reviewing the history of realizability of open manifolds as leaves,
we now state our main results.
In Section \ref{s:Yinfty} we shall define a class $\mathcal Y$ of smooth open manifolds (up to diffeomorphism) whose underlying topological manifolds are obtained by removing a finite non-zero number of
points from a closed, connected, simply connected topological $4$-manifold. In fact (see Example~\ref{exYinfty} and Remark \ref{r:continuum}), every such topological manifold is homeomorphic to uncountably many elements of $\mathcal Y$.

\begin{thmA}
No manifold $Y\in\mathcal{Y}$ is diffeomorphic to any leaf of a  $C^{2}$ codimension one foliation of a compact manifold.
\label{mainthm}\end{thmA}

Theorem~\ref{mainthm} and all of our results and proofs hold for the slightly weaker assumption of $C^{1+\Lip}$ regularity. For the sake of readability and coherence with the references we have decided to state this theorem for $C^2$ foliations. As a consequence of Theorem \ref{mainthm},
for every $Y\in\mathcal{Y}$
there are uncountably many diffeomorphically distinct smooth manifolds homeomorphic to $Y$
that cannot be leaves in any $C^2$ foliation of a compact $5$-manifold.
Also note that if $Z$ is obtained by puncturing a {\em smooth\/} closed simply connected manifold $M$, then the induced smooth manifold can easily be realized as a leaf of a $C^\infty$ codimension one foliation; just insert Reeb components along transverse closed curves in the product foliation of $M\times S^1$. 

It is unknown whether any element of $\mathcal{Y}$ is diffeomorphic to a leaf of a $C^{1,0}$ codimension one foliation on a compact manifold. If that can happen, we show that some restrictions must appear as to how it is realized; this is summarized in the next Proposition which is the main step in proving Theorem \ref{mainthm}.

\begin{propA}
If there exists a leaf diffeomorphic to $Y\in \mathcal{Y}$ in a $C^{1,0}$ codimension one
foliation of a closed $5$-manifold, then it is a proper leaf and
each connected component of the union of the leaves diffeomorphic
to $Y$ fibers over the circle with the leaves as fibers.
\label{prop1}\end{propA}

Next we review some of the history of leaves and non-leaves.
It was shown by J. Cantwell and L. Conlon \cite{Cantwell-Conlon} that every orientable open surface is homeomorphic (in fact, diffeomorphic) to a leaf of a foliation on each closed $3$-manifold, and non-orientable open surfaces are homeomorphic to leaves in non-orientable $3$-manifolds. The first examples of topological non-leaves were due to E. Ghys~\cite{Ghys1} and  T. Inaba, T. Nishimori, M. Takamura, N. Tsuchiya \cite{JAP}; these are highly topologically non-periodic open $3$-manifolds which cannot be homeomorphic to leaves in a codimension one foliation in a compact manifold. Years later, O. Attie and S. Hurder~\cite{Attie-Hurder}, in a deep analysis of the question, found simply connected $6$-dimensional examples of non-leaves, non-leaves which are homotopy equivalent to leaves and even a Riemannian manifold which is not quasi-isometric to a leaf in arbitrary codimension. These examples follow the line of the work of A. Phillips and D. Sullivan \cite{Phillips-Sullivan} and T. Januszkiewicz~\cite{Janusz} and led to other examples of Zeghib \cite{Zeghib} and the second author \cite{Schweitzer}. 

C.L. Taubes~\cite{Taubes} showed that the smooth structure of some of the exotic $\R^4$'s is, in some sense, non-periodic at infinity, and this leads to the existence of uncountably many non-diffeomorphic smooth structures on $\R^4$. It is an open problem whether any exotic $\R^4$---and, by extension, any given open manifold with a similar exotic smooth end structure---can be diffeomorphic to a leaf of a foliation on a compact manifold. By a simple cardinality argument, most exotic $\R^4$'s cannot be covering spaces of closed smooth $4$-manifolds by smooth covering maps since the diffeomorphism classes of smooth closed manifolds are countable.
All these results motivated a folklore conjecture in foliation theory suggesting that these exotic structures cannot occur on leaves of a smooth foliation in a compact manifold.

The main difference between some exotic $\R^4$'s (called {\em large}) and the standard $\R^4$ is the fact that they cannot embed smoothly in a standard $\R^4$. An important question for a large exotic $\R^4$ is to describe what are the simplest spin manifolds (in the sense of the second Betti number) in which it can be embedded; this is measured by the invariant defined by L. Taylor \cite{Taylor1}, which provided the first direct tool to show that some exotic $\R^4$'s cannot be non-trivial covering spaces. We shall show that these exotica are also non-leaves.

For all the other finitely many punctured simply connected $4$-manifolds we shall see that a {\em Taubes like\/} end (see Definition \ref{d:RS}) suffices to show that they are non-leaves. In  Proposition \ref{prop1} we adapt Ghys' procedure in \cite{Ghys1} to show some necessary conditions for such structures to be leaves of a codimension one foliation on a compact manifold.  In Theorem \ref{mainthm}, which is an easy corollary of Proposition \ref{prop1}, we complete this analysis in the case of  $C^{2}$ foliations (those where the transverse coordinate changes are  $C^{2}$ maps). 

The paper is organized as follows:

\begin{itemize}
\item The first section is devoted to exotic structures on open $4$-manifolds, particularly on $\R^4$. This is in fact a brief exposition of results in \cite{Taubes,Taylor1}. Here we define the particular exotic structures that we consider on $\R^4$ and show some of their properties.

\item In the second section we prove Proposition \ref{prop1} which gives necessary conditions for certain exotic punctured simply connected $4$-manifolds to be diffeomorphic to leaves, following Ghys' method of proof \cite{Ghys1}, and we derive its corollary Theorem \ref{mainthm}.

\item The last section includes some last remarks and open questions.
\end{itemize}

We would like to thank L. Conlon, R. Gompf, G. Hector, L. Taylor, and the referees for their help in preparing this paper. We also want to thank PUC-Rio and UFRJ as the main host institutions for their material support during the process of developing this work.

\section{Exotic structures on $\R^4$}

In this section we construct uncountably many exotic structures in $\R^4$ which are non-periodic by Taubes' work. Later we shall need a better control of this structure, which is provided by the invariant defined by L. Taylor \cite{Taylor1}. This introduction begins with a brief reminder of some known facts in $4$-dimensional topology.

\begin{thm}[Freedman \cite{Freedman}]\label{t:freedman}
Two simply connected closed $4$-manifolds are homeomorphic if and only if their intersection forms are isomorphic and they have the same modulo $2$ Kirby-Siebenmann invariant. In particular, simply connected smooth closed $4$-manifolds are homeomorphic if and only if their intersection forms are isomorphic.
\end{thm}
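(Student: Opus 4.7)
The easy direction (the intersection form and Kirby-Siebenmann invariant are homeomorphism invariants) is standard, so the real content of the theorem is uniqueness: given closed simply connected topological $4$-manifolds $M_1,M_2$ with isomorphic intersection forms $Q$ and equal $\Z/2$ Kirby-Siebenmann invariants, produce a homeomorphism $M_1\to M_2$. The plan is to follow Freedman's argument in \cite{Freedman}, which reduces the statement to a topological $h$-cobordism theorem in dimension $5$.

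First, I would use the algebraic isomorphism of intersection forms to line up handle decompositions of $M_1$ and $M_2$ and assemble an abstract $5$-dimensional cobordism $W$ from $M_1$ to $M_2$ whose relative homology vanishes, i.e.\ an $h$-cobordism. The task then becomes showing that every such $h$-cobordism in dimension $5$ with simply connected ends is topologically a product, thereby furnishing the desired homeomorphism. Following the scheme of Smale's $h$-cobordism theorem, this requires cancelling the algebraically trivial $2$/$3$-handle pairs geometrically, which in turn requires producing flat topologically embedded Whitney disks to realize those cancellations.

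The main, and essentially only, obstacle is therefore Freedman's topological disk embedding theorem: any immersed Whitney disk in a simply connected topological $4$-manifold with vanishing framing and intersection obstructions bounds a flat topologically embedded disk. Its proof constructs an infinite Casson tower of kinky handles around the given immersed disk and then uses decomposition-space methods and Bing topology to identify the interior of the tower with the interior of a standard $2$-handle $D^2\times\R^2$. Granting this, topological handle cancellation goes through, the $h$-cobordism theorem follows, and the classification is established. The Kirby-Siebenmann invariant enters only to control the Rokhlin-type parity ambiguity that persists after the intersection forms are matched, so equality of the $\Z/2$ KS invariants is precisely what kills this last obstruction. The secondary claim for smooth closed simply connected $4$-manifolds is then immediate, since a smooth (hence PL) structure forces the Kirby-Siebenmann invariant to vanish.
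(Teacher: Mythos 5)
The paper does not prove Theorem \ref{t:freedman}: it is stated as a background result cited from Freedman's work \cite{Freedman} (and, for the full topological version with Kirby--Siebenmann, implicitly also Freedman--Quinn \cite{Freedman-Quinn}), so there is no in-paper argument to compare your sketch against. Treated on its own, your outline is a faithful high-level summary of the actual proof: reduce to the $5$-dimensional topological $h$-cobordism theorem, whose proof rests on Freedman's disk embedding theorem via infinite Casson towers and Bing-type decomposition theory, and observe that a smooth (hence PL) structure kills the Kirby--Siebenmann obstruction, which gives the second assertion.

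One step you pass over too quickly is the construction of the $h$-cobordism $W$ itself. An algebraic isomorphism of intersection forms does not directly let you ``line up handle decompositions'' of $M_1$ and $M_2$; what one actually invokes is Wall's stabilization theorem (simply connected closed $4$-manifolds with isomorphic forms become diffeomorphic, hence $h$-cobordant, after summing with enough copies of $S^2\times S^2$), or equivalently a surgery-theoretic computation, adapted to the topological category. This is where the serious input beyond the disk embedding theorem lives, and it is also where the Kirby--Siebenmann invariant genuinely enters for odd forms (for even forms KS is forced by Rokhlin to equal $\sigma/8 \bmod 2$, so the hypothesis is vacuous there). Your description of KS as controlling ``a Rokhlin-type parity ambiguity'' is morally right, but the mechanism is the surgery obstruction in the topological category, not something that falls out of handle cancellation alone.
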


\begin{thm}[Donaldson \cite{Donaldson}]\label{t:Donaldson}
If a smooth closed simply connected $4$-manifold has a definite intersection form then it is isomorphic to a diagonal form.
\end{thm}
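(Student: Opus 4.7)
The plan is to prove this by gauge theory: one constructs a cobordism from $X$ to a disjoint union of orientation-reversed complex projective planes via the moduli space of anti-self-dual $SU(2)$ connections, then extracts the diagonal form algebraically from this cobordism. After possibly reversing orientation I may assume $Q$ is negative definite, so $b_{+}(X)=0$ and $b_{2}(X)=b_{-}(X)=:n$.

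First I would fix a Riemannian metric $g$ on $X$ and an $SU(2)$-bundle $E\to X$ with $c_{2}(E)=1$, and consider the moduli space $\mathcal{M}$ of gauge equivalence classes of $g$-anti-self-dual connections on $E$. The Atiyah--Hitchin--Singer index formula gives expected real dimension $8c_{2}(E)-3(1+b_{+}(X))=5$; by Freed--Uhlenbeck transversality, for a generic metric $\mathcal{M}$ is a smooth $5$-manifold at its irreducible classes. Reducible anti-self-dual connections correspond to splittings $E=L\oplus L^{-1}$ with $c_{1}(L)=e$ satisfying $Q(e,e)=-1$; let $n'$ denote the number of such classes modulo sign.

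Next I would pin down the ends and singularities of $\mathcal{M}$. Uhlenbeck compactness shows that the only noncompact end comes from instantons concentrating at a point of $X$, and Taubes' gluing theorem parametrises a collar of this end by $X$ itself. Each reducible class contributes an isolated conical singularity whose link is $\mathbb{CP}^{2}$, reflecting the residual $U(1)$-stabiliser acting on a Kuranishi model. Excising small open cones around the trivial end and around each reducible yields a smooth compact oriented $5$-manifold $W$ with $\partial W = X \sqcup n' \cdot \overline{\mathbb{CP}^{2}}$, so that $X$ is oriented cobordant to $n'$ copies of $\overline{\mathbb{CP}^{2}}$. A classical lattice argument then shows that the $n'$ characteristic classes $\pm e$ of norm $-1$ produced by reducibles assemble, via $H_{2}(W)/\mathrm{tors}$, into an orthonormal basis of $(H^{2}(X;\Z),Q)$, forcing $n'=n$ and $Q\cong\langle -1\rangle^{\oplus n}$.

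The hard part, and the original content of Donaldson's work, is the analytic package behind the second step: generic transversality, Uhlenbeck removable-singularity and compactness theorems, and Taubes' precise identification of the bubble end of $\mathcal{M}$ with $X$. These are what convert an otherwise soft moduli space into a rigid cobordism; once they are in place, the algebraic finish is comparatively routine.
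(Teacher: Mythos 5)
This statement is not proved in the paper; it is Donaldson's diagonalization theorem, quoted as background from \cite{Donaldson}, so there is no in-paper argument to compare against. Your sketch does follow the overall architecture of Donaldson's original proof: work with $Q$ negative definite, form the $c_{2}=1$ anti-self-dual moduli space of expected dimension $5$, use Freed--Uhlenbeck generic transversality and Uhlenbeck compactness to control the irreducible part, identify the bubble end with a collar $X\times(0,\lambda_{0})$ via Taubes' gluing, recognize the reducibles as cone points with link $\mathbb{CP}^{2}$, and conclude by a cobordism-plus-lattice count of the pairs $\pm e$ with $Q(e,e)=-1$.

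Two steps, however, are glossed in a way that would leave real gaps if written out. First, you assert that excision yields an \emph{oriented} $5$-manifold $W$, but orientability of the moduli space (and a coherent choice of orientation making the induced orientations on the $n'$ copies of $\mathbb{CP}^{2}$ agree) is a nontrivial theorem of Donaldson that must be proved, not assumed; without it the signature count does not close. Second, the finishing step ``the $n'$ classes assemble, via $H_{2}(W)/\mathrm{tors}$, into an orthonormal basis'' is not how the argument actually concludes. The standard finish is: cobordism invariance of the signature gives $\sigma(X)=-n'$, hence $n'=n=b_{2}(X)$; then one invokes the elementary lattice fact that a negative definite unimodular lattice of rank $n$ contains at most $n$ pairs $\pm e$ with $e\cdot e=-1$, and that equality forces the vectors to be pairwise orthogonal and to span, hence $Q\cong\langle -1\rangle^{\oplus n}$. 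Your appeal to $H_{2}(W)$ does not obviously reproduce this, and as stated it reads as a misremembering of the signature argument. With those two points repaired, the sketch is an accurate outline of Donaldson's theorem.
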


Definite symmetric bilinear unimodular forms are not classified and it is known that the number of
isomorphism classes grows at least exponentially with the range. Indefinite unimodular forms are classified \cite{Serre}: two indefinite forms are isomorphic if they have the same range, signature, and parity. There are canonical representatives for the indefinite forms; in the odd case the form is diagonal and in the even case it splits into invariant subspaces where the intersection form is either $\pm E_8$ or $H$. These canonical representatives are denoted as usual with the notation $m[+1]\oplus n[-1]$ for the odd case and $\pm mE_8\oplus nH$ with $n>0$ for the even one.

$$
E_8=\left( \begin{array}{cccccccc}
2 & 1 & 0 & 0 & 0 & 0 & 0 & 0\\
1 & 2 & 1 & 0 & 0 & 0 & 0 & 0\\
0 & 1 & 2 & 1 & 0 & 0 & 0 & 0\\
0 & 0 & 1 & 2 & 1 & 0 & 0 & 0\\
0 & 0 & 0 & 1 & 2 & 1 & 0 & 1\\
0 & 0 & 0 & 0 & 1 & 2 & 1 & 0\\
0 & 0 & 0 & 0 & 0 & 1 & 2 & 0\\
0 & 0 & 0 & 0 & 1 & 0 & 0 & 2
\end{array} \right);\quad
H=\left(\begin{array}{cc}
0 & 1\\
1 & 0
\end{array}\right)
$$ \bigskip

For each symmetric bilinear unimodular form  there exists at least one topological simply connected closed $4$-manifold with an isomorphic intersection form. But this is no longer true for the smooth case, as Donaldson's theorem asserts. It is an open problem what unimodular forms can be realized in smooth simply connected closed $4$-manifolds. Recall that a (not necessarily closed) simply connected smooth $4$-manifold is spin if and only if its intersection form is even. It is known that for a smooth simply connected spin $4$-manifold with indefinite intersection form the number of ``$E_8$ blocks'' must be even (Rokhlin's theorem). It is possible to say more, as in Furuta's theorem~\cite{Furuta} which will be useful in this section.

\begin{thm}[Furuta \cite{Furuta}]\label{t:Furuta}
If $M$ is a smooth closed spin (not necessarily simply-connected) 4-manifold
with an intersection form equivalent to $\pm 2mE_8\oplus nH$ and $m> 0$, then $n\geq 2m + 1$.
\end{thm}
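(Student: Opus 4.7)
The plan is to prove this by Furuta's $10/8$ argument in Seiberg-Witten theory. After possibly reversing orientation, I would assume the form is $-2mE_8\oplus nH$, so that $\sigma(M)=-16m$ and $b_2^+(M)=n$; the desired inequality then becomes $b_2^+(M)\ge 2m+1$. Since $E_8$ and $H$ are even, $M$ admits a spin structure, and I would fix one, say $\mathfrak{s}$.

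First I would set up the Seiberg-Witten monopole map $\mu$ associated to the spin$^c$ structure refining $\mathfrak{s}$. The essential observation is that the spin structure endows the spinor bundle with a compatible quaternionic structure, which enlarges the usual $S^1$ gauge symmetry to $\mathrm{Pin}(2)=S^1\cup jS^1\subset\mathbb{H}^{\times}$: the central $S^1$ acts by scalars on spinors, while $j$ acts quaternionically on spinors and by $-1$ on imaginary self-dual $2$-forms. After Coulomb gauge fixing (using $b_1(M)=0$ from simple-connectedness), $\mu$ becomes a $\mathrm{Pin}(2)$-equivariant, proper nonlinear Fredholm map of the form
\[ \mu:\Gamma(S^+)\oplus\ker d^*\longrightarrow \Gamma(S^-)\oplus\Omega^2_+(M), \]
where $\ker d^*$ denotes the Coulomb slice in $\Omega^1(M)$.

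Next I would apply Furuta's finite-dimensional approximation, obtaining a $\mathrm{Pin}(2)$-equivariant proper map $f:V^+\oplus W^+\to V^-\oplus W^-$ between finite-dimensional $\mathrm{Pin}(2)$-representations, where $V^{\pm}$ are sums of copies of the quaternionic representation $\mathbb{H}$ coming from truncating the spinor spaces, and $W^{\pm}$ are sums of copies of the real sign representation $\widetilde{\mathbb{R}}$ coming from truncating the form spaces. By the Atiyah--Singer index theorem for the Dirac operator, combined with the quaternionic structure,
\[ \dim_{\mathbb{H}}V^+-\dim_{\mathbb{H}}V^-=-\sigma(M)/16=m, \]
while on the form side $\dim_{\widetilde{\mathbb{R}}}W^--\dim_{\widetilde{\mathbb{R}}}W^+=b_2^+(M)=n$. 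Passing to one-point compactifications, $f$ extends to a based $\mathrm{Pin}(2)$-equivariant map of representation spheres
\[ \bar f:S^{(a+m)\mathbb{H}+b\widetilde{\mathbb{R}}}\longrightarrow S^{a\mathbb{H}+(b+n)\widetilde{\mathbb{R}}} \]
for some large $a,b$, whose restriction $S^{b\widetilde{\mathbb{R}}}\to S^{(b+n)\widetilde{\mathbb{R}}}$ to the $S^1$-fixed set is homotopically nontrivial (this nontriviality comes from a standard compactness/orientation argument on the reducible locus).

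The main obstacle is to extract a numerical obstruction from the mere existence of such a $\bar f$: this is Furuta's central $10/8$ lemma. I would follow his original proof via $\mathrm{Pin}(2)$-equivariant complex K-theory: apply $K^*_{\mathrm{Pin}(2)}$ to $\bar f$, use the Bott/Thom class of the representation $\mathbb{H}$ together with the $R(\mathrm{Pin}(2))$-module structure on $K^*_{\mathrm{Pin}(2)}(\mathrm{pt})$, and run an Adams-operation-style divisibility argument on the pulled-back Thom class. Comparing the order of vanishing on the nontrivial $S^1$-fixed-set component against the divisibility forced on the $\mathbb{H}$-directions produces exactly the inequality $n\ge 2m+1$. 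Translating back through the (possible) orientation reversal yields $n\ge 2m+1$ in the original orientation, completing the proof.
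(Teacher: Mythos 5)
The paper does not prove this result; it is recalled and cited verbatim from Furuta's original article, so there is no in-paper argument against which to compare. What you have sketched is a reasonable high-level outline of Furuta's actual proof of the $10/8$ theorem: the $\mathrm{Pin}(2)$-symmetry of the spin monopole map, finite-dimensional approximation to a map of representation spheres built from $\mathbb{H}$ and $\widetilde{\mathbb{R}}$, the index computation $\dim_{\mathbb{H}}V^+-\dim_{\mathbb{H}}V^-=-\sigma/16=m$ and $\dim_{\widetilde{\mathbb{R}}}W^--\dim_{\widetilde{\mathbb{R}}}W^+=b_2^+=n$, and the equivariant K-theory divisibility argument.

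One imprecision is worth flagging. You assert that the restriction of $\bar f$ to the $S^1$-fixed sets, a map $S^{b\widetilde{\mathbb{R}}}\to S^{(b+n)\widetilde{\mathbb{R}}}$, is \emph{homotopically nontrivial}. As a non-equivariant map of spheres this is impossible for $n>0$, since the target sphere has strictly larger dimension and every such map is null-homotopic. What the argument actually requires is essentiality in the residual $\mathbb{Z}/2=\mathrm{Pin}(2)/S^1$-equivariant sense: the restriction is the compactification of an injective linear map of $\mathbb{Z}/2$-representations, and its further restriction to the $\mathrm{Pin}(2)$-fixed sets is the identity $S^0\to S^0$ (properness plus the fact that the unique reducible lies at the origin). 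That pointed, equivariant nontriviality is precisely what feeds the K-theoretic/tom Dieck-style comparison; the purely non-equivariant phrasing would give nothing. With that correction the outline matches Furuta's published argument. As a proposal the crucial K-theoretic divisibility lemma is of course only gestured at, so the sketch cannot be checked in full detail, but the structure is the right one.
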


Let us recall an important theorem of M.H. Freedman which is the main tool to determine when a manifold is homeomorphic to $\R^4$.

\begin{thm}[Freedman \cite{Freedman}]\label{t:R4}
An open $4$-manifold is homeomorphic to $\R^4$ if and only if it is contractible and simply connected at infinity.
\end{thm}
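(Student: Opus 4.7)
The plan is as follows. The ``only if'' direction is immediate: $\R^4$ is convex, hence contractible, and for any compact $K\subset\R^4$ any closed ball $B_R\supset K$ has simply connected complement, so every loop outside $B_R$ bounds a disk outside $K$. For the substantive ``if'' direction, let $M$ be an open, contractible $4$-manifold that is simply connected at infinity. My strategy is to show that the one-point compactification $\widehat M:=M\cup\{\infty\}$ is homeomorphic to $S^4$; the theorem then follows since $M\cong \widehat M\setminus\{\infty\}\cong \R^4$. By Freedman's topological $4$-dimensional Poincar\'e conjecture (the closed case of \cite{Freedman}), it suffices to verify that $\widehat M$ is (a) a topological $4$-manifold, (b) simply connected, and (c) has the integral homology of $S^4$.

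Items (b) and (c) are comparatively soft. Simple connectivity of $\widehat M$ follows from $\pi_1(M)=1$ together with simple connectivity at infinity by a van Kampen argument applied to $M$ and a small neighborhood of $\infty$. The homology computation follows from contractibility of $M$ via the long exact sequence of the pair $(\widehat M, M)$ and excision at $\infty$, once (a) is in place.

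The heart of the matter is therefore (a): the point $\infty$ must have a Euclidean neighborhood in $\widehat M$, equivalently, the end of $M$ must admit a neighborhood homeomorphic to $S^3\times[0,\infty)$. I would use simple connectivity at infinity to extract a decreasing sequence of connected, simply connected open neighborhoods $U_n$ of the end with locally flat bicollared boundaries $\Sigma_n$, and then use Poincar\'e--Lefschetz duality for $(M,U_n)$ together with $\widetilde H_*(M)=0$ to force each $\Sigma_n$ to be an integral homology $3$-sphere, which is then seen to be a homotopy $S^3$ via Hurewicz and Whitehead once $\pi_1(\Sigma_n)$ is shown to die in a suitable $U_{n+1}$. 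The genuinely deep step, and the main obstacle, is to upgrade this homotopy-theoretic description of the end to an actual homeomorphism $U_n\cong S^3\times [0,\infty)$: this is precisely what Freedman's proper $h$-cobordism theorem in dimension four provides, resting on his topological disk embedding theorem in the simply connected setting. With this in hand $\infty$ acquires neighborhoods homeomorphic to open cones on $S^3$, i.e., open $4$-balls, establishing (a) and completing the proof.
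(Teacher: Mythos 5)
The paper does not prove this statement; it is quoted verbatim as a citation of Freedman's 1982 work, so there is no ``paper's own proof'' to compare against. Evaluating your outline on its own terms: the global strategy (pass to the one-point compactification $\widehat M$, verify it is a simply connected closed topological $4$-manifold with the homology of $S^4$, and invoke Freedman's topological $4$-dimensional Poincar\'e theorem) is a legitimate route, and your reductions of items (b) and (c) to item (a) are fine.

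The genuine gap is inside (a). You assert that each separating $3$-manifold $\Sigma_n$ ``is then seen to be a homotopy $S^3$ via Hurewicz and Whitehead once $\pi_1(\Sigma_n)$ is shown to die in a suitable $U_{n+1}$.'' Simple connectivity at infinity only gives you that the \emph{inclusion-induced} map $\pi_1(\Sigma_n)\to\pi_1(U_{n+1})$ is trivial; it does not make $\pi_1(\Sigma_n)$ itself trivial. A priori $\Sigma_n$ could be, e.g., a Poincar\'e homology sphere bounding a Mazur-type compact contractible piece on the inside. Killing $\pi_1(\Sigma_n)$ requires ambient $2$-handle surgery along null-homotopic loops (or the Freedman--Quinn end theorem), and in dimension four this surgery itself rests on the disk embedding theorem, so it is part of the deep content, not a homotopy-theoretic formality. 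Moreover, even a simply connected $\Sigma_n$ is only a \emph{homotopy} $S^3$; the step from there to an $S^3\times[0,\infty)$ collar (and hence to $\widehat M$ being a manifold at $\infty$) is not free. Freedman did not have the $3$-dimensional Poincar\'e conjecture in 1982, so his argument must proceed differently, e.g.\ by applying the proper $h$-cobordism theorem to $M$ minus an interior $4$-ball to identify that complement directly with $S^3\times[0,\infty)$, rather than first pinning down the homeomorphism type of a cross-section $\Sigma_n$. Naming the proper $h$-cobordism and disk embedding theorems as ``the main obstacle'' is fair, but as written the intermediate deductions would not go through.
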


\begin{defn}
Two ends ${\bf e}_1$ and ${\bf e}_2$ of smooth manifolds are {\em diffeomorphic} if they have diffeomorphic neighborhoods
${\bf X}_{{\bf e}_1}$ and ${\bf X}_{{\bf e}_2}$. It will always be assumed that orientation is preserved by that diffeomorphism. Two manifolds with one end are {\em end-diffeomorphic} if their ends are diffeormorphic
\end{defn}

The main tool for measuring the wildness of some exotica will be the {\em Taylor index\/}, introduced by L. Taylor in \cite{Taylor1}.

\begin{defn}[Taylor \cite{Taylor1}]
Let $E$ be a smoothing of $\R^4$. Let $\mathit{Sp}(E)$ be the set of closed smooth spin $4$-manifolds $N$ with trivial or hyperbolic intersection form (a sum of copies of $H$)  in which $E$ embeds smoothly. Define $b_E=\infty$ if $\mathit{Sp}(E)=\emptyset$, or else:
$$
2b_E=\min_{N\in\mathit{Sp}(E)}\{b_2(N)\}\;$$
where $b_2(N)$ is the second Betti number of $N$.

Let $\mathcal{E}(E)$ be the set of topological embeddings $e:D^4\to E$ such that $e$ is smooth in the neighborhood of some point of the boundary and $e(\partial D^4)$ is (topologically) bicollared. Set $b_e=b_{e(\mathring{D}^4)}$ where $e(\mathring{D^4})$ has the smooth structure induced by $E$. The {\em Taylor index\/} of $E$ is defined to be
$$
\gamma(E)=\max_{e\in\mathcal{E}(E)}\{b_e\}\;.
$$
For a spin manifold $M$, the Taylor index of $M$ is the supremum of the Taylor-indices of all the exotic $\R^4$'s embedded in $M$.
\label{d:Taylor index}\end{defn}

Another important tool for this section is the ``end sum'' construction. For open manifolds this is analogous to the connected sum of closed manifolds. Given two open smooth oriented manifolds $M$ and $N$ with the same dimension we choose two smooth properly embedded  paths $c_1:[0,\infty)\to M$ and $c_2:[0,\infty)\to N$, each of them defining one end in $M$ and $N$ respectively. Let $V_1$ and $V_2$ be smooth tubular neighborhoods of $c_1([0,\infty))$ and $c_2([0,\infty))$. The boundaries of these neighborhoods are clearly diffeomorphic to $\R^3$ and we can obtain a smooth sum by removing the interiors of these neighborhoods and identifying their boundaries so as to produce a manifold with an orientation respecting the orientations of $M$ and $N$. This will be called the {\em end sum} of $M$ and $N$ associated to $c_1$ and $c_2$, and it is denoted by $M\natural N = (M\setminus \mathring{V_1})\bigcup_\partial (N\setminus \mathring{V_2})$. In the case where $N$ and $M$ have exactly one end and are both homeomorphic to $S^3\times\R^+$,  $c_1$ and $c_2$ are unique up to ambient isotopy and thus the smooth structure of $M\natural N$ does not depend on the chosen paths.
End sum was the first technique which made it possible to find infinitely many exotic structures on $\R^4$~\cite{Gompf2} and it is an important tool for dealing with the problem of generating infinitely many smooth structures on open $4$-manifolds~\cite{Bizaca-Etnyre,Ganzell}.

\begin{lemma}[Lemma 5.2 \cite{Taylor1}]\label{l:subadditivity}
If $\mathbf{R}$ and $\mathbf{S}$ are exotic $\R^4$'s, then $\gamma(\mathbf{R}\natural\mathbf{S})\leq\gamma(\mathbf{R})+\gamma(\mathbf{S})$.
\end{lemma}

We now present a version of Taubes' theorem which suffices for our purposes.

\begin{defn}[Periodic end]\label{d:periodic_end}
Let $M$ be an open smooth manifold with an end homeomorphic to $S^3\times (0,\infty)$. We say that this end is {\em smoothly periodic\/} if there exists a neighborhood $V\subset M$ of the end that is homeomorphic to $S^3\times (0,\infty)$ and a diffeomorphism $h:V\to h(V)\subset V$ such that $h^n(V)$ defines the given end (i.e., $\{h^n(V)\}$ is a neighborhood base for the end).
\end{defn}

Note that this notion of smoothly periodic end is a particular case of the admissible periodic ends considered in \cite{Taubes}.

\begin{thm}[Taubes \cite{Taubes}] \label{t:Taubes}
Let $M$ be an open smooth simply connected $4$-manifold with a definite intersection form and exactly one end. If the end of $M$ is homeomorphic to $S^3\times(0,\infty)$ and smoothly periodic, then the intersection form is isomorphic to a diagonal form. 
\end{thm}

\begin{defn}\label{d:RS}
Throughout this work $\mathcal{M}_-$ (resp. $\mathcal{M}_+$) will denote the family of smoothings of closed topological $4$-manifolds, $M$, with exactly one puncture so that there exists $s\in\N$ such that $\natural_{i=1}^s M$ is end-diffeomorphic to a smoothing of a punctured topological simply connected negative (resp. positive) definite but not diagonal $4$-manifold. Set $\mathcal{M}=\mathcal{M}_-\cup\mathcal{M}_+$. These manifolds and their ends will be called {\em Taubes like\/}.

The set $\mathcal{S}$ will denote the family of all exotic $\R^4$'s $\mathbf{R}$ for which there exist $s,k\in\N^+$ such that the $s$-fold end-sum $\natural^s_{i=1}\mathbf{R}$
is end-diffeomorphic to a smoothing of a punctured simply connected spin $-k(E_8\oplus E_8)$ manifold\footnote{These are called $(s,k)$-{\em simple-semi-definite\/} in \cite{Taylor1}.}. The subfamily $\mathcal{R}$ will be formed by the exotica in $\mathcal{S}$ with finite Taylor index.
\end{defn}

\begin{rem}\label{r:prop S-R}
Of course, $\mathcal{R}\subset\mathcal{S}\subset\mathcal{M}_-$. Observe that $(\mathcal{M}_{\pm},\natural)$, $(\mathcal{S},\natural)$ and $(\mathcal{R},\natural)$ are semigroups, this comes from the fact that the sum of non-diagonal definite forms of the same sign is still definite and non-diagonal by the Eichler-Kneser Theorem (see e.g. Theorem 9.24 in \cite{Gerstein}). Observe also that infinite end sums of elements in $\mathcal{S}$ are still elements of $\mathcal{S}$. We shall see that this property is false for $\mathcal{R}$ (see Proposition \ref{p:TaylorR}; of course it is also false for $\mathcal{M_\pm}$ just by purely topological reasons).

Let $M$ and $N$ be two smooth $4$-manifolds end-homeomorphic to $\R^4$ so that $M\subset N$ and $M\setminus N$ is homeomorphic to $S^3\times [0,\infty)$ with topologically bicollared boundary. If $M\in\mathcal{M}$ then $N\in\mathcal{M}$.
\end{rem}

\begin{rem}\label{r:TaubesM} No end of a Taubes like manifold is smoothly periodic: if $M\in\mathcal{M}$ is smoothly periodic then $\natural^s M$ will be also smoothly periodic for all $s\in\N$. But for some $s$ this manifold would be end-diffeomorphic to a definite non-diagonal simply connected $4$-manifold (by definition of $\mathcal{M}$), but this is not possible by Taubes' Theorem.  In particular, Taubes like ends are exotic, i.e., they are not diffeomorphic to the ends of the standard $S^3\times \mathbb R$, which are obviously smoothly periodic.

\end{rem}

\begin{exmp}\label{e:known examples}
The existence of exotica in $\mathcal{R}$ and $\mathcal{S}$ with these properties is well known (see e.g. \cite{Ganzell, Gompf-Stipsicz, Taylor1}). Let $M_0$ be the $K3$ Kummer surface. It is known that the intersection form of $M_0$ can be written as $-2E_8\oplus3H$, where the six elements in $H_2(M_0,\Z)$ spanning the summand $3H$ can be represented by six Casson handles $C_i$ attached to a closed $4$-dimensional ball $B^4$ inside $M_0$. Let $U=\intr(B^4\cup \bigcup_{i=1}^6C_i)$ which is clearly homeomorphic to a punctured $\#^3S^2\times S^2$ by Freedman's theorem~\ref{t:freedman}. Let $S$ be the union of the cores of the Casson handles, which we consider to be inside $\#^3S^2\times S^2$. By Theorem~\ref{t:R4} the manifold $\mathbf{P}=\#^3S^2\times S^2\setminus S$ is homeomorphic to $\R^4$. If this $\mathbf{P}$ were standard then we could smoothly replace the $3H$ part in the intersection form of $M_0$ by a standard ball, so the resulting smooth closed manifold would have intersection form $-(E_8\oplus E_8)$, in contradiction to Donaldson's theorem (Theorem~\ref{t:Donaldson}), since $(-E_8\oplus E_8)$ is not isomorphic to a diagonal form. Since $\mathbf{P}$  is contained in $\#^3S^2\times S^2$ it follows that $\gamma(\mathbf{P})\leq 3$ and therefore $\mathbf{P}\in\mathcal{R}$.\end{exmp}

\begin{exmp}\label{e:T}
With some care in the above construction (see e.g. Example 5.6 in \cite{Taylor1}) the six Casson handles can be arranged as three diffeomorphic pairs that can each be embedded in $S^2\times S^2$. As above, the complement of a neighborhood of each core in $S^2\times S^2$ is an exotic $\R^4$, say $\mathbf{T}$, such that $\natural^3\mathbf{T}=\mathbf{P}$. It is clear that $\mathbf{T}$ cannot be end-diffeomorphic to any spin punctured $mE_8$ manifold with $m\in\Z, m\neq 0$, for otherwise we would obtain a smoothable closed spin simply connected $(mE_8)\oplus H$ manifold: just use the end of $\mathbf{T}$ to attach $S^2\times S^2\setminus\mathbf{T}$ to that punctured manifold. But this is impossible by Furuta's and Rokhlin's theorems (depending on whether $m$ is even or odd). This is another large exotic $\R^4$ which belongs to $\mathcal{R}$.\end{exmp}

\begin{exmp}\label{e:Eetc}
Another interesting exotic $\R^4$ is given as follows. Take the indefinite form $-(E_8\oplus E_8)\oplus\langle 1\rangle$. It follows by Freedman's Theorem \ref{t:freedman} that there exists a closed simply connected manifold with such an intersection form that is homeomorphic to $\#^{16}\overline{\C P^2}\#\C P^2$ (where $\overline{\C P^2}$ denotes the complex projective plane with the opposite orientation). Thus the former manifold is smoothable but it is impossible to represent this manifold as a connected sum of a $-(E_8\oplus E_8)$ manifold with $\C P^2$ using a smooth $3$-sphere. But the homology generator of the $\C P^2$ summand can be represented by a Casson handle. As above, that Casson handle can be embedded in $\C P^2$ representing its homology. Removing a suitable neighborhood of its core we get an exotic $\R^4$, that will be called $\mathbf{E}$, which is end-diffeomorphic to a punctured spin simply connected $-(E_8\oplus E_8)$ manifold and can be embedded in $\C P^2$ (see details in Example 5.10 of \cite{Taylor1}). It is unknown whether its Taylor index is finite. In particular it is unknown if $\mathbf{E}$ is diffeomorphic to $\mathbf{P}$.
\end{exmp}

\begin{exmp}
The family $\mathcal{S}$ includes the universal exotic $\R^4$ presented in \cite{Freedman-Taylor} and any other possible universal smoothing of $\R^4$ (see the last paragraph in Remark~\ref{r:prop S-R}).
\end{exmp}

\begin{exmp}
Finally, a smoothing of a punctured $\pm E_8$ manifold is trivially a Taubes like manifold, so its end is not standard. But it is unknown whether this manifold is end-diffeomorphic to an exotic $\R^4$.
\end{exmp}

\begin{notation}\label{disk K_t}
Let $\mathbf{R}$ be an exotic $\R^4$ and let $\psi_{\mathbf{R}}:\R^4\to\mathbf{R}$ be a homeomorphism. Let us denote $\mathbf{K}^{\psi_\mathbf{R}}_t=\psi_\mathbf{R}(D(0,t))$, where $D(0,t)$ is the standard closed disk of radius $t$, and consider the smooth structure induced
on $\mathring{\mathbf{K}}^{\psi_\mathbf{R}}_t$ by $\mathbf{R}$. For future reference, we choose the homeomorphism $\psi_\mathbf{R}$ so that for each $t>0$ the boundary of the topological disk $\mathbf{K}_t$ is smooth in a neighborhood of some point. The existence of such $\psi_{\mathbf{R}}$'s is clear; in fact, they can be chosen to be smooth in a neighborhood of one of the axes \cite{Quinn}. By an abuse of notation, we shall use the notation $\mathbf{K}_t$ instead of $\mathbf{K}^{\psi_\mathbf{R}}_t$ whenever the underlying exotic $\R^4$ is clear from the context and $\psi_\mathbf{R}$ is any homeomorphism as above.

\end{notation}

\begin{cor}[Theorem 5.4 in \cite{Taylor1}]\label{c:TaubesS}
Let $\mathbf{R}\in\mathcal{S}$ and let $\psi:\R^4\to\mathbf{R}$ be a homeomorphism. Then there exists $r>0$ such that, for any $t>s>r$, $\mathring{\mathbf{K}}_t$ is not diffeomorphic to  $\mathring{\mathbf{K}}_s$.
\end{cor}

In the following, whenever $\mathbf{R}\in\mathcal{S}$ (resp. $\mathcal{R}$), $r_{\psi_\mathbf{R}}$ will be assumed large enough so that $\mathring{\mathbf{K}}_t\in\mathcal{S}$ (resp. $\mathcal{R}$) for all $t>r_{\psi_\mathbf{R}}$.

If $N_1$ and $N_2$ are oriented manifolds with connected boundaries, where each boundary is assumed to be smooth in a neighborhood of some point, then the {\em boundary connected sum} $M=N_1\#_\partial N_2$ is obtained by 
identifying embedded smooth closed disks in $\partial N_1$ and $\partial N_2$ by an orientation reversing diffeomorphism (so that the resulting manifold is oriented) and smoothing the result. Then the interior of the boundary connected sum is the end sum of the interiors, $\mathring{M}=\mathring{N_1}\natural\mathring{N_2}$. If $N_1$ and $N_2$ are disjoint connected oriented codimension zero submanifolds with connected boundaries embedded in a connected oriented manifold $M$ so as to respect the orientations, then $N_1\#_\partial N_2$ can also be embedded in $M$ using  a standard cylinder to join smooth standard disks in the boundaries of $N_1$ and $N_2$.

\begin{prop}[Proposition 2.2 \cite{Taylor1}]\label{p:TaylorD}
Let $e_i\in\mathcal{E}(E)$, $i=1,\dots,k$, be pairwise disjoint closed topological disks in $E$  (respecting the orientations). Then there exists $e\in\mathcal{E}(E)$ such that $e(\mathring{D^4})$ is diffeomorphic to $\natural_{i=1}^k e_i(\mathring{D^4})$.
\end{prop}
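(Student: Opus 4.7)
The plan is to reduce to the case $k=2$ by induction and construct the desired disk explicitly in that case. For the base case, let $e_1, e_2 \in \mathcal{E}(E)$ be disjoint and, for each $i$, pick a point $p_i \in \partial e_i(D^4)$ in the open region where $e_i$ is smooth. Since $E$ is a connected smooth $4$-manifold and the complement $E \setminus (e_1(D^4) \cup e_2(D^4))$ is path-connected (bicollared Schoenflies), I would choose a smooth embedded arc $\alpha\colon [0,1] \to E$ joining $p_1$ to $p_2$, meeting each $\partial e_i(D^4)$ transversely at $p_i$ and with $\alpha((0,1))$ contained in $E \setminus (e_1(D^4) \cup e_2(D^4))$. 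Taking a sufficiently thin closed smooth tubular neighborhood $T$ of $\alpha$ in $E$, one may arrange that $T \cap e_i(D^4)$ is a smooth $4$-ball around $p_i$. Setting $W := e_1(D^4) \cup T \cup e_2(D^4)$, topologically this is the boundary connect sum $D^4 \natural D^4 \cong D^4$; after rounding corners, $\partial W$ is a topologically bicollared $3$-sphere in $E$, and any point of $\partial e_i(D^4)$ at which $e_i$ was smooth and which lies away from $p_i$ persists as a smooth point of $\partial W$. Hence a suitable parametrization $e\colon D^4 \to W$ lies in $\mathcal{E}(E)$.

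The crux is identifying the smooth structure on $\mathring W$ induced from $E$ with the end sum $e_1(\mathring{D^4}) \natural e_2(\mathring{D^4})$. My plan is to realize the end-sum construction intrinsically inside $\mathring W$. Choose smooth proper rays $c_i\colon [0, \infty) \to e_i(\mathring{D^4})$ with $c_i(t) \to p_i$ in the $E$-topology, and let $V_i$ be a smooth tubular neighborhood of the image of $c_i$ inside $e_i(\mathring{D^4})$, with $\partial V_i \cong \R^3$. The concatenation $c_1 \ast \alpha \ast c_2$ is then a smooth properly embedded line in $\mathring W$, and by uniqueness of tubular neighborhoods of smooth submanifolds, $V_1 \cup T \cup V_2$ is a smooth tubular neighborhood of this line in $\mathring W$, hence diffeomorphic to $\R^4$. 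The decomposition
\[
\mathring W \;=\; \bigl(e_1(\mathring{D^4}) \setminus V_1\bigr) \,\cup\, \bigl(V_1 \cup T \cup V_2\bigr) \,\cup\, \bigl(e_2(\mathring{D^4}) \setminus V_2\bigr)
\]
then matches the definition of the end sum, once the two $\R^3$ gluing interfaces are correctly identified, yielding $\mathring W \cong e_1(\mathring{D^4}) \natural e_2(\mathring{D^4})$.

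For general $k$, I would iterate: apply the base case to $e_1$ and $e_2$, but arrange the arc $\alpha$ and the tube $T$ to be disjoint from $e_3(D^4), \dots, e_k(D^4)$ (possible because these are compact and do not disconnect $E$), producing a disk $e' \in \mathcal{E}(E)$ disjoint from the remaining ones. Then apply the inductive hypothesis to $e', e_3, \dots, e_k$. Associativity of the end sum up to diffeomorphism, together with the freedom in the choice of proper rays in each summand, then delivers $e(\mathring{D^4}) \cong \natural_{i=1}^k e_i(\mathring{D^4})$.

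The main obstacle I expect is the smooth identification in the second paragraph: checking that the tube $T$ meets each $V_i$ in a way compatible with both smooth structures, so that their union is truly a smooth $\R^4$ and its removal leaves back the smooth manifolds $e_i(\mathring{D^4}) \setminus V_i$. This hinges on the smoothness of $e_i$ near $p_i$, which ensures that the attaching $3$-disks $T \cap \partial e_i(D^4)$ are genuinely smoothly embedded, combined with the tubular neighborhood theorem for smooth properly embedded arcs. Once these compatibilities are pinned down, the remainder of the argument is a standard manipulation of end sums.
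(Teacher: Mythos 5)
The paper itself gives no proof here---it simply cites Taylor's Proposition~2.2---so what follows is an assessment of your argument on its own terms. Your strategy (induct down to $k=2$; join the two disks by a smooth tube $T$ through the complement; show the interior of $W=e_1(D^4)\cup T\cup e_2(D^4)$ is the end sum) is the right one, and the first paragraph, establishing that $W$ yields an element of $\mathcal{E}(E)$, is essentially fine. The gap is concentrated exactly where you suspect it is, and it is not quite filled.

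First, a slip: with $c_i(t)\to p_i$ and $p_i\in\mathring W$, the concatenation $c_1\ast\alpha\ast c_2$ is a \emph{compact} arc in $\mathring W$, not a properly embedded line, since both of its ``ends'' converge to interior points $p_1,p_2$. This does not kill the argument (a smooth closed tubular neighborhood of a compact arc is a $D^4$, whose interior is a standard $\R^4$), but the stated justification is incorrect. More seriously, knowing that $V_1\cup T\cup V_2$ is diffeomorphic to $\R^4$ is not yet what the end-sum definition in the paper asks for. The definition glues $\partial V_1\cong\R^3$ directly to $\partial V_2\cong\R^3$; your three-piece decomposition instead inserts a middle block. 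To conclude $\mathring W\cong e_1(\mathring D^4)\natural e_2(\mathring D^4)$ you must exhibit the middle block as a smooth product collar, i.e.\ prove that the closed region of $\mathring W$ between $\partial V_1$ and $\partial V_2$ is diffeomorphic to $\R^3\times[0,1]$ with $\R^3\times\{0\}\to\partial V_1$ and $\R^3\times\{1\}\to\partial V_2$ the identity, and then invoke the collar-absorption argument together with the fact (needing citation) that the end sum of oriented one-ended manifolds is independent of the choice of ray, tubular neighborhood, and boundary identification. Showing the middle region is such a collar is not the same as showing $V_1\cup T\cup V_2\cong\R^4$, and it requires precisely the smooth compatibility of $V_i$ with $T$ across $\partial e_i(D^4)$ near $p_i$ that you flag as the ``main obstacle'' but do not resolve: one has to arrange the closed tubular neighborhood $\bar V_i$ in $e_i(\mathring D^4)$ so that near $p_i$ it matches $T\cap e_i(D^4)$ smoothly, which is possible using the smoothness of $e_i$ near $p_i$, but must be spelled out. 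Finally, the set-theoretic decomposition $\mathring W=(M\setminus V_1)\cup(V_1\cup T\cup V_2)\cup(N\setminus V_2)$ is not literally correct ($T$ contains points of $\partial W$, and $\mathring W$ contains points of $\partial e_i(D^4)$ not accounted for), so the three pieces and the two gluing loci need to be defined with more care before the collar argument can even be applied. None of this looks unfixable, but as written the key diffeomorphism is asserted rather than proved.
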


\begin{prop}[Theorem 5.3 \cite{Taylor1}]\label{p:TaylorR}
Let $\mathbf{R}\in\mathcal{S}$ and let $s,k\in\N^+$ so that $\natural^s\mathbf{R}$ is end diffeomorphic to a spin simply connected punctured $-k(E_8\oplus E_8)$ manifold.  Then $0<2k/s<\gamma(\mathbf{R})$. Therefore $\gamma(\natural^n \mathbf{R})$ tends to $\infty$ as $n\to\infty$, and so $\gamma(\natural_{i=1}^\infty\mathbf{R})=\infty$.
\end{prop}

By the choice of $\psi_\mathbf{R}$, it follows that
$\mathbf{K}_t\in\mathcal{E}(\mathbf{R})$ (i.e., $\mathbf{K}_t$ is a topologically embedded ball with the properties indicated in Definition \ref{d:Taylor index}). The next Proposition can be seen as a Corollary of Theorem 7.1 in \cite{Taylor1}. 

\begin{prop}[Theorem 7.1 \cite{Taylor1}]
No $\mathbf{R}\in\mathcal{R}$ can be a smooth covering space of a smooth compact $4$-manifold. 
\end{prop}
\begin{proof}
If this were the case then there would exist a properly discontinuous smooth $\Z$-action on $\mathbf{R}$. It follows that $\mathbf{R}$ would contain infinitely many pairwise disjoint copies of sets diffeomorphic to $\mathbf{K}_t$ for any $t>r_{\psi_\mathbf{R}}$. By Proposition~\ref{p:TaylorD}, it follows that $\natural^\infty \mathring{\mathbf{K}}_t$ could be embedded in $\mathbf{R}$. Since $r_{\psi_\mathbf{R}}$ was chosen so that $\mathbf{K}_t$ belongs to $\mathcal{S}$, it follows that $\gamma(\mathbf{R})=\infty$ by Proposition~\ref{p:TaylorR}. But $\gamma(\mathbf{R})$ must be finite since it is an element of $\mathcal{R}$. 
\end{proof}

On the other hand, $\natural_{i=1}^\infty\mathbf{R}$ can be a non-trivial covering space of an open manifold. In fact it admits several free actions (see e.g. \cite{Gompf, Gompf-Stipsicz}); for example,
this exotic is diffeomorphic to the
end sum $\natural_{i\in\mathbb{Z}}\mathbf{R}$, which
admits an obvious free action of $\mathbb{Z}$ whose quotient is
an exotic $\R^3\times S^1$.

\begin{rem}\label{r:infinite index}
In Example 5.10 in \cite{Taylor1}, uncountably many non-diffeomorphic smooth structures on $\R^4$ with infinite Taylor index are exhibited. Consider the element $\mathbf{E}\in\mathcal{S}$ presented in Example~\ref{e:Eetc}. It is shown that the manifolds $\mathbf{\mathring{K}}^{\psi_\mathbf{E}}_t\natural\left(\natural_{i=1}^\infty\mathbf{E}\right)$ are pairwise non-diffeomorphic for all $t>r_{\psi_\mathbf{E}}$. Remark also that, although $\natural_{i=1}^\infty\mathbf{E}$ cannot be embedded in any spin closed manifold with hyperbolic intersection form, it can be embedded in $\C P^2$.
\end{rem}

\begin{rem}\label{r:good-compact} 
Let $\mathbf{R}\in\mathcal{S}$. Given a strictly increasing sequence $\{t_k\}$ tending to infinity with every $t_k>r_{\psi_{\mathbf{R}}}$,  let us set $\mathbf{C}^{\psi_\mathbf{R}}_{k}=\mathbf{K}_{t_k}\setminus \mathring{\mathbf{K}}_{t_{k-1}}$, each of which is homeomorphic to $S^3\times [0,1]$. We use the notation $\mathbf{C}_k$ instead of $\mathbf{C}_k^{\psi_\mathbf{R}}$ whenever $\mathbf{R}$ and $\psi_\mathbf{R}$ are clear from the context.

Of course, $\mathbf{C}_k$ also depends on the sequence $\{t_k\}$, but these data are inessential and will be omitted for the sake of simplicity. A similar notation can be adapted to the end of any Taubes like manifold $M$. Let $X$ be a cylindrical neighborhood of the end of $M\in\mathcal{M}$ and let $\psi_X:X\to S^3\times [0,\infty)$ be a homeomorphism. Given an increasing sequence of positive numbers $t_k$ going to infinity, we can set $\mathbf{C}_k=\psi_X^{-1}(S^3\times[t_{k-1},t_k])$ with the induced smooth structure as a subset of $M$. Again there is a dependence on the sequence, the neighborhood $X$, and the homeomorphism $\psi_X$ which will also be omitted.

For any $M\in\mathcal{M}$ with $\mathbf{C}_k$ smoothly embedded in $M$, $M\setminus\mathring{\mathbf{C}}_k$ has two components, one of them compact, say $K_k$, and the other unbounded, say $X_k$. Let us denote the boundary component of $\mathbf{C}_k$ which bounds $K_k$ by $\partial^-\mathbf{C}_k$ and the boundary component which bounds $X_k$ by $\partial^+\mathbf{C}_k$. We claim that for every sufficiently large $k$, $\mathring{K}_k$ is also a Taubes like manifold. There exists $s$ so that $\natural^s M$ is end-diffeomorphic to a definite non-diagonal open $4$-manifold, say $N$. Let $Y$ be a neighborhood of the end of $N$ which is homeomorphic to $S^3\times [0,\infty)$ and diffeomorphic (preserving orientation) to a neighborhood $Y'$ of the end of $\natural^s M$. For all suficiently large $k$ the set $\natural^s \mathring{K}_k\cap Y'$ is also homeomorphic to $S^3\times [0,\infty)$. Then $\natural^s \mathring{K}_k$ is end-diffeomorphic to $(N\setminus Y)\cup_\partial (\natural^s\mathring{K}_k\cap Y')$, which is also a simply connected definite non-diagonal manifold (homeomorphic to $N$), so $\mathring{K}_k$ is in fact Taubes like.
\end{rem}

\begin{rem}
Observe that all the above results given for $\mathcal{R}$ and $\mathcal{S}$ can be applied verbatim to $\overline{\mathcal{R}}$ and $\overline{\mathcal{S}}$ which denote the same manifolds but with the orientation reversed (so they are elements in $\mathcal M_+$). This follows from the fact that changing orientations does not affect Furuta's Theorem \ref{t:Furuta} or the Taylor index. \label{r:barR}
\end{rem}

\section{Exotic simply connected smooth $4$-manifolds and foliations}  \label{s:Yinfty} 

\subsection{The Family of Exotica}
Here we define certain exotic simply connected smooth manifolds that we shall show cannot be diffeomorphic to leaves of a $C^2$ codimension one foliation in a compact smooth manifold. As mentioned in the introduction, we are interested in the set of open $4$-manifolds which are obtained by removing a finite non-zero number of points from a closed, connected, simply connected topological $4$-manifold.

\begin{defn}
Let $\mathcal Y$ be the set of open smooth $4$-manifolds $Y$ (up to diffeomorphism) that are
homeomorphic to simply connected closed topological $4$-manifolds with finitely many punctures, such that
$Y$ satisfies one of the following two conditions (see Definition \ref{d:RS} and Remark \ref{r:barR}):
\begin{enumerate}

\item $Y\in\mathcal{R}\cup\overline{\mathcal{R}}$ (and so $Y$ is homeomorphic to $\R^4$);

\item $Y$ is not homeomorphic to $\R^4$ and at least one (exotic) end is diffeomor\-phic (preserving orientation) to the end of some element in $\mathcal{M}$ (i.e., it is a Taubes like end).
\end{enumerate}
\label{def} \end{defn}

\noindent Observe that $\mathcal{Y}$ is the (non-disjoint) union of two families: $\mathcal{Y}_f$ where at least one Taubes like end is diffeomorphic to the end of an exotic $\R^4$ with finite Taylor index and $\mathcal{Y}_\infty$ where at least one Taubes like end is not diffeomorphic to the end of an exotic $\R^4$ with finite Taylor index. By definition, $\mathcal{R}\cup\overline{\mathcal{R}}\subset\mathcal{Y}_f$, hence only $\mathcal{Y}_f$ contains exotic $\R^4$'s.

\begin{exmp}
If $Z$ is a simply connected smooth closed $4$-manifold that is not homeomorphic to $S^4$, then $Z\#\mathbf{R}$ belongs to ${\mathcal Y}$ for all $\mathbf{R}\in\mathcal{S}\cup\overline{\mathcal{S}}$. All these manifolds are homeomorphic but not diffeomorphic to the standard $Z\setminus\{\ast\}$ by Taubes' work.

If $Z$ is an arbitrary simply connected but non-smoothable closed $4$-manifold, after removing a point it becomes smoothable (every open $4$-manifold is smoothable, see e.g. \cite{Freedman-Quinn}). By Theorem 2.1 in \cite{Gompf}, $Z\setminus\{\ast\}$ admits a smoothing which is end-diffeomorphic to some simply connected punctured spin $-kE_8$ manifold (in fact, we can take $k=2$ if the Kirby-Siebenmann invariant of $Z$, $\ks(Z)$, is trivial and $k=3$ otherwise). So in both cases we get smooth Taubes like manifolds in $\mathcal Y$ homeomorphic to $Z\setminus\{\ast\}$. Moreover, when $\ks(Z)=0$ that end is shown to be end-diffeomorphic to an exotic $\R^4$ with finite Taylor index (it can be embedded in $\#^nS^2\times S^2$ for some sufficiently large $n$), so that smoothing belongs to $\mathcal{Y}_f$. In any case, after forming an infinite end sum with (possibly distinct) elements in $\mathcal{S}$ we also obtain smooth manifolds in $\mathcal{Y}_\infty$ homeomorphic to $Z\setminus\{\ast\}$.

Of course, we can add more punctures to these smoothings and we still get elements in $\mathcal{Y}$, so any topological simply connected $4$-manifold obtained by removing finitely many punctures from a closed manifold is homeomorphic to some element in $\mathcal{Y}$.
\label{exYinfty}  
\end{exmp}

\begin{rem}\label{r:continuum}
Taubes' theorem (Theorem \ref{t:Taubes}, Remark~\ref{r:TaubesM} and Corollary~\ref{c:TaubesS}) shows that for any $Y\in\mathcal{Y}$ there exists an uncountable family of smooth manifolds in $\mathcal{Y}$ which are homeomorphic but non-diffeomorphic to $Y$. The same argument works for elements in $\mathcal{Y}_f$, i.e., for any element in $\mathcal{Y}_f$ there is a continuum of elements in $\mathcal{Y}_f$ which are homeomorphic but not diffeomorphic to it.

Although Taubes' theorem applies also to elements in $\mathcal{Y}_\infty$ we cannot derive a continuum of structures in $\mathcal{Y}_\infty$. For instance, consider $\natural^\infty\mathbf{P}$ (see Example~\ref{e:known examples} for the definition of $\mathbf{P}$); it has infinite Taylor index by Proposition~\ref{p:TaylorR} but any $\mathring{\mathbf{K}}_t$ has finite Taylor index since it is contained in a finite end-sum of copies of $\mathbf{P}$ which has finite Taylor index by Lemma~\ref{l:subadditivity}. 

However Taylor's results (see Remark \ref{r:infinite index}) show that for all $Y\in\mathcal{Y}_\infty$ which are end-diffeomorphic to $\natural^\infty\mathbf{E}$ there is a continuum of manifolds in $\mathcal{Y}_\infty$ which are homeomorphic but non-diffeomorphic to $Y$ (this is also true if the orientation is reversed). Let $M$ be a topological (non-smoothable) simply connected compact manifold. If $\ks(M)=0$ then $M\# \C P^2$ or $M\# \overline{\C P^2}$ is smoothable by Freedman's Theorem~\ref{t:freedman} since the intersection form becomes indefinite. By surgering the $\C P^2$ or $\overline{\C P^2}$ summand by means of a suitable neighborhood of the core of a Casson handle (diffeomorphic to the given in Example~\ref{e:Eetc}, since any two Casson handles have a common refinement) we can obtain a smoothing of $M\setminus\{\ast\}$ end-diffeomorphic to $\mathbf{E}$ or $\overline{\mathbf{E}}$. It follows that every simply connected topological $4$-manifold with finite punctures and trivial modulo 2 Kirby-Siebenmann invariant admits a continuum of smoothings in $\mathcal{Y}_\infty$. It is not clear for us how to solve this question when $\ks(M)\neq 0$ and there is a single punture (for two or more puntures, just take an smoothing where one end is standard and then perform an end-sum with $\natural^\infty\mathbf{E}$).
\end{rem}

\subsection{The Proof of Proposition \ref{prop1}}
Recall that Proposition \ref{prop1} states that if a leaf of a $C^{1,0}$ codimension one foliation of a compact manifold is diffeomorphic to $Y\in \mathcal Y$, then the leaf is proper and each connected component of the union of all leaves diffeomorphic to $Y$ fibers over the circle with the leaves as fibers.

\begin{rem}
Regularity $C^{1,0}$ means that the leaves are $C^1$ and tangent to a continuous hyperplane distribution of codimension one.
\end{rem}

In proving Proposition \ref{prop1} we use the basic theory of codimension one foliations of smooth compact manifolds presented as integrable plane fields. Note that in this general situation there exists a smooth transverse one-dimensional foliation $\NN$ and a biregular foliated atlas, i.e., one in which each coordinate neighborhood is foliated simultaneously as a product by $\FF$ and $\NN$. The transverse coordinate changes are only assumed to be continuous but the leaves can be taken to be smooth manifolds and the local projection along $\NN$ of one plaque onto another plaque in the same chart is a diffeomorphism. Our basic tools are Dippolito's octopus decomposition and his semistability theorem \cite{Candel-Conlon,Dippolito} as well as the trivialization lemma of G. Hector \cite{Hector}. We assume that our foliation is transversely oriented, which is not a real restriction since every manifold considered to be a leaf is simply connected and therefore, by passing to the transversely oriented double cover, a transversely oriented foliation with a leaf diffeomorphic to it is obtained. 

For a saturated open set $U$ of $(M,\FF)$, let $\hat{U}$ be the completion of $U$ for a Riemannian metric of $M$ restricted to $U$. The inclusion $i:U\to M$ clearly extends to an immersion $i:\hat{U}\to M$, which is at most $2$-to-$1$ on the boundary leaves of $\hat{U}$. We shall use $\partial^\tau$ and $\partial^\pitchfork$ to denote the tangential and transverse boundaries, respectively.

\begin{thm}[Octopus decomposition \cite{Candel-Conlon,Dippolito}]\label{t:octopus}
Let $U$ be a connected saturated open set of a codimension one transversely orientable foliation $\FF$ in a compact manifold $M$.  There exists a compact submanifold $K$ (the nucleus) with boundary and corners such that
\begin{enumerate}
\item $\partial^\tau K \subset\partial^\tau\hat{U}$

\item $\partial^\pitchfork K$ is saturated for $i^\ast\NN$

\item the set $\hat{U}\setminus K$ is the union of finitely many non-compact connected components $B_1,\dots,B_m$ (the arms) with boundary, where each $B_i$ is diffeomorphic to a product $S_i\times[0,1]$ by a diffeomorphism $\phi_i:S_i\times[0,1]\to B_i$ such that the leaves of $i^\ast\NN$ exactly match the fibers $\phi_i(\{\ast\}\times[0,1])$.

\item the foliation $i^\ast\FF$ in each $B_i$ is defined as the suspension of a homomorphism from $\pi_1(S_i)$ to the group of homeomorphisms of $[0,1]$. Thus the holonomy in each arm of this decomposition is completely described by the action of $\pi_1(S_i)$ on a common complete transversal.
\end{enumerate}
\end{thm}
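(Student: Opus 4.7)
The plan is to build $K$ from the combinatorics of the holonomy pseudogroup acting on a complete transversal, and then to verify that each of the remaining pieces of $\hat{U}$ is a product on which $i^{\ast}\FF$ is a suspension. First I would fix a finite biregular foliated atlas $\{(V_\alpha,\phi_\alpha)\}$ for $(M,\FF,\NN)$ and take a complete compact transversal $T$ to $\FF$, built as a finite union of arcs from the transverse factors of these charts. The holonomy pseudogroup $\HH$ acts on $T$ by local homeomorphisms, generated by finitely many elements corresponding to chart overlaps. The set $T_U := T\cap U$ is open in $T$, hence a disjoint union of open intervals, and its endpoints in $T\setminus T_U$ correspond to plaques of $\partial^{\tau}\hat{U}$ seen from the transversal.

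The crucial step I would then establish is a finiteness statement: the endpoints of components of $T_U$ fall into only finitely many $\HH$-orbits. Because each generator of $\HH$ has compact domain, and compactness of $\overline{U}\subset M$ forbids the orbits of endpoints from escaping to infinitely many distinct classes, one obtains representatives $x_1,\dots,x_m$ exhausting all orbit classes. For each $i$ pick a small plaque $P_i$ in a foliation chart around $x_i$ meeting $U$ on the appropriate side, and fix a base plaque $P_0\subset U$.

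Next I would construct the nucleus. Connect $P_0$ to each $P_i$ by a chain of overlapping plaques inside $U$, and let $K_0$ be the union of the closed foliation boxes realizing these chains, swept out by the transverse coordinate where the chain remains in $U$. Trimming $K_0$ along local leaves of $i^{\ast}\NN$—so that $\partial^{\pitchfork} K$ consists of entire $i^{\ast}\NN$-leaves rather than arbitrary transverse sections—yields the nucleus $K$: compact, with boundary and corners, tangential boundary contained in $\partial^{\tau}\hat{U}$, and transverse boundary $i^{\ast}\NN$-saturated.

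Finally I would analyze the arms. Each connected component $B_i$ of $\hat{U}\setminus K$ accumulates only on the $\HH$-orbit of a single endpoint $x_i$, so it lies in a neighborhood of one tangential boundary leaf $S_i$ (modulo the at most two-to-one immersion $i$). Since the transverse boundary of $B_i$ is a single leaf of $i^{\ast}\NN$ and $B_i$ retracts onto $S_i$ along $i^{\ast}\NN$, Hector's trivialization lemma furnishes the diffeomorphism $\phi_i\colon S_i\times[0,1]\to B_i$ matching $i^{\ast}\NN$-leaves with the fibers $\phi_i(\{\ast\}\times[0,1])$. In this product picture the holonomy around loops in $S_i$ defines a homomorphism $\pi_1(S_i)\to\Homeo([0,1])$ whose suspension recovers $i^{\ast}\FF|_{B_i}$, establishing (4). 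The main obstacles I anticipate are the finiteness of $\HH$-orbits of boundary endpoints and the trimming that simultaneously keeps $K$ compact while forcing $\partial^{\pitchfork} K$ to be $i^{\ast}\NN$-saturated; once these are in place, the arm-by-arm triviality is a direct application of Hector's lemma.
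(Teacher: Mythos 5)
The paper does not give a proof of this theorem; it is invoked as a standard result with references to Dippolito's paper and to Candel--Conlon, so there is no internal argument to compare your attempt against. I therefore assess your sketch on its own merits.

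Your outline follows the general shape of the standard argument (complete compact transversal, holonomy pseudogroup, nucleus built from plaque chains, arms trivialized by a product lemma), but two steps are asserted rather than proved, and one of them hides essentially all the work. The finiteness step is not established: you claim that the endpoints of components of $T\cap U$ lie in finitely many $\HH$-orbits ``because each generator of $\HH$ has compact domain, and compactness of $\overline{U}\subset M$ forbids the orbits \dots from escaping to infinitely many distinct classes.'' Since $M$ is compact, $\overline{U}$ is always compact, so this remark carries no content; the finiteness of border leaves of $\hat{U}$ is a genuine theorem that needs its own argument. More seriously, your use of Hector's trivialization lemma to make each arm a product presupposes its hypothesis. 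The lemma requires that each leaf meet the transverse arc in at most one point; in a candidate arm $B_i$, an $i^\ast\FF$-leaf could a priori recross the transverse trace several times before reaching the tangential boundary. Proving that one can enlarge the nucleus until the remaining components actually satisfy the ``meets-once'' condition is precisely the delicate content of Dippolito's construction, and your sketch assumes the conclusion when it writes that ``$B_i$ retracts onto $S_i$ along $i^\ast\NN$.'' Finally, the geometric description of the arms is off: $S_i$ is a non-compact cross-section with compact boundary, not a border leaf of $\hat{U}$, and $\partial^\pitchfork B_i$ is $\partial S_i\times[0,1]$, a finite union of $i^\ast\NN$-arcs, not ``a single leaf of $i^\ast\NN$.'' Until the finiteness of border leaves and the meets-once property for the trimmed complement are actually proved, the plan does not close.
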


Observe that this decomposition is far from being canonical, for the compact set $K$ can be extended in many ways yielding other decompositions. We do not consider the transverse boundary of $B_i$ to be a part of $B_i$; in particular, the leaves of $i^\ast\FF_{|{B_i}}$ are open sets in leaves of $i^\ast\FF$. Remark also that the word diffeomorphism will only be applied to open sets (of $M$ or of leaves of $\FF$); on the transverse boundaries the maps are only considered to be homeomorphisms.

\begin{lemma}[Trivialization Lemma \cite{Hector}]\label{l:trivialization}
Let $J$ be an arc in a leaf of $\NN$. Assume that each leaf meets $J$ in at most one point. Then the saturation of $J$ is diffeomorphic to $L\times J$, where $L$ is a leaf of $\FF$, and the diffeomorphism carries the bifoliation $\FF$ and $\NN$ to the product bifoliation of $L\times J$ (with leaves $L\times\{\ast\}$ and $\{\ast\}\times J$).
\end{lemma}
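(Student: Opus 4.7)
My plan is to construct a bifoliation-preserving diffeomorphism $\Phi\colon L\times J\to\str(J)$ by holonomy transport, where $L:=L_{x_0}$ is the $\FF$-leaf through a chosen basepoint $x_0\in J$. The key local fact, supplied by the biregular foliated atlas, is that inside each chart the projection along $\NN$-leaves from one plaque of $\FF$ to another is a smooth diffeomorphism, even though the global transverse coordinate changes of $\FF$ are merely continuous. Given $p\in L$ and a path $\gamma\colon[0,1]\to L$ from $x_0$ to $p$, I cover $\gamma$ by finitely many biregular charts and compose the corresponding $\NN$-projections, producing a candidate map $\Phi_p^\gamma\colon J\to M$ that carries each $x\in J$ to a point on $L_x$ lying in a single $\NN$-leaf through $p$.

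The central step is showing that $\Phi_p^\gamma$ does not depend on $\gamma$. Given another path $\gamma'$ from $x_0$ to $p$, the loop $\gamma\bar\gamma'$ at $x_0$ in $L$ induces a holonomy germ on $(J,x_0)$; since holonomy preserves $\FF$-leaves and each leaf meets $J$ in at most one point, this germ must send every nearby $x$ to the unique point of $J\cap L_x=\{x\}$, hence is the identity germ. To upgrade germ triviality to agreement on all of $J$, I consider
\[
S=\{x\in J : \Phi_p^\gamma(x)=\Phi_p^{\gamma'}(x)\}\,,
\]
which is open (because the local transports are diffeomorphisms), contains a neighborhood of $x_0$ (germ triviality), and is closed by continuity of both transports in $x$. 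Connectedness of $J$ forces $S=J$, and so $\Phi(p,x):=\Phi_p^\gamma(x)$ is unambiguously defined.

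The remaining verifications are routine. Smoothness of $\Phi$ in both variables is inherited from the local $\NN$-projections. Injectivity is immediate: if $\Phi(p_1,x_1)=\Phi(p_2,x_2)$, the common point lies in $L_{x_1}\cap L_{x_2}$, forcing $x_1=x_2$ by hypothesis and then $p_1=p_2$ by injectivity of transport inside a single $\NN$-leaf. For surjectivity I apply the construction symmetrically based at $x\in J$ with $L_x$ in place of $L$ (which is legitimate because $L_x$ also meets $J$ in the single point $x$), obtaining an inverse holonomy $\psi_x\colon L_x\to L$; the identities $\phi_x\psi_x=\id_{L_x}$ and $\psi_x\phi_x=\id_L$ show that $\Phi(\,\cdot\,,x)$ is a diffeomorphism onto $L_x$, so $\Phi$ surjects onto $\str(J)$. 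By construction $\Phi(L\times\{x\})=L_x$ and $\Phi(\{p\}\times J)$ lies in a single leaf of $\NN$, so the product bifoliation is carried onto $(\FF,\NN)|_{\str(J)}$.

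I expect the main obstacle to be the globalization of path-independence in the second step. The "at most one point" hypothesis immediately trivializes the holonomy germ at $x_0$, but because the transverse structure of $\FF$ is only continuous, extending this trivial germ to the whole compact arc $J$ does not happen automatically; it relies on the open-closed-connected argument above, which in turn uses the hypothesis pointwise along all of $J$ to prevent any nontrivial monodromy from developing away from $x_0$.
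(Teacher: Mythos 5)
The paper does not prove this lemma; it cites it directly from Hector \cite{Hector}, so your attempt must be measured against the standard argument. Your circle of ideas—define $\Phi$ by holonomy transport along paths in $L$, use the hypothesis to kill holonomy germs, and globalize with an open--closed--connected argument—is the right one, and your identification of path-independence as requiring care (rather than following instantly from germ triviality) is a correct observation.

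The gap is in the well-definedness of $\Phi_p^\gamma$ itself. Composing the $\NN$-projections along a finite chain of biregular charts covering $\gamma$ produces a map defined only on a neighborhood of $x_0$ in $J$: as $x$ moves away from $x_0$, the ``parallel'' path in $L_x$ obtained by $\NN$-projection need not stay inside the charts chosen to cover $\gamma$, so the composite may cease to be defined before exhausting $J$. Consequently the set
$S=\{x\in J:\Phi_p^\gamma(x)=\Phi_p^{\gamma'}(x)\}$
is only a subset of $\dom(\Phi_p^\gamma)\cap\dom(\Phi_p^{\gamma'})$, and the closedness of $S$ in $J$—the step you rely on—fails if either domain is a proper subinterval of $J$: a limit point $x_1$ of $S$ could lie outside one of the domains. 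Your later appeal to ``apply the construction symmetrically based at $x$'' has the same issue.

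This is precisely the globalization difficulty you flag as ``the main obstacle,'' but you apply the open--closed--connected argument only to path-independence, not to existence of the transport. The missing step is a separate open--closed argument in the parameter of $\gamma$: let $T=\{t\in[0,1] : \text{transport of }J\text{ along }\gamma|_{[0,t]}\text{ is defined on all of }J\}$. One shows $0\in T$; $T$ is open because the transported arc $J_t$ is compact, so finitely many biregular charts cover it and for $t'$ near $t$ the $\NN$-projections extend over all of $J_t$ simultaneously; and $T$ is closed by a symmetric argument with $t'$ slightly to the left. Compactness of $J$ is used essentially here, and without this step the map $\Phi$ you build is not known to have domain $L\times J$. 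Once this is supplied, your path-independence and bijectivity arguments go through.
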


\begin{thm}[Dippolito semistability theorem \cite{Candel-Conlon,Dippolito}]\label{t:semistability}
Let $L$ be a semiproper leaf which is semistable on the proper side, i.e., there exists a sequence of fixed points for all the holonomy maps of $L$ converging to $L$ on the proper side. Then there exists a sequence of leaves $L_n$ converging to $L$ on the proper side and projecting diffeomorphically onto $L$ via the fibration defined by $\NN$.
\end{thm}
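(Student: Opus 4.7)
The plan is to apply the octopus decomposition to a saturated open set containing $L$ as a boundary leaf on the proper side, describe the holonomy of $L$ in terms of the arm data, and then invoke Hector's trivialization lemma on the transversal running from $L$ to a common fixed point.

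Let $U$ be a saturated open set of $(M,\FF)$ having $L$ as a boundary leaf on the proper side; such a $U$ exists because $L$ is semiproper. Applying Theorem \ref{t:octopus} to $U$ produces a nucleus $K$ and arms $B_i\cong S_i\times[0,1]$, on each of which $i^\ast\FF$ is the suspension of a representation $\rho_i:\pi_1(S_i)\to\Homeo([0,1])$. Fix $p\in L\cap K$ and a transverse arc $T\subset\NN$ through $p$ with $T\setminus\{p\}$ on the proper side. By hypothesis there is a sequence $x_n\in T$ converging to $p$ on which every holonomy germ of $L$ at $p$ acts trivially. Since the holonomy group of $L$ at $p$ is generated by loops contained in the compact set $L\cap K$ together with loops that enter and return through the arms (which induce the representations $\rho_i$), in particular each $\rho_i(\pi_1(S_i))$ fixes every $x_n$.

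Let $L_n$ be the leaf of $\FF$ through $x_n$. On each arm, the level $S_i\times\{x_n\}$ is itself a leaf of the suspension, not a nontrivial cover of $S_i$, precisely because $\rho_i$ fixes $x_n$; hence this piece of $L_n$ projects diffeomorphically via $\NN$ onto $S_i\times\{0\}\subset L$. On a compact neighborhood of $K\cap L$ we cover by finitely many biregular foliation charts, and for $n$ large the plaque of $L_n$ in each chart is a small vertical translate of the corresponding plaque of $L$, so again projects diffeomorphically. Gluing the nucleus and arm pieces along the transverse boundaries yields a diffeomorphism $L_n\to L$ along $\NN$. Equivalently, if $J_n\subset T$ denotes the sub-arc from $p$ to $x_n$, no leaf of $\FF$ meets $J_n$ twice: a hypothetical second intersection $y\neq z$ in a leaf $L'$ would, upon concatenating a path in $L'$ from $y$ to $z$ with the sub-arc of $J_n$ from $z$ back to $y$, project to a loop in $L$ whose holonomy germ moves $x_n$, contradicting the choice of the sequence. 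Lemma \ref{l:trivialization} then identifies the saturation of $J_n$ with $L\times J_n$, exhibiting $L_n$ as the fiber $L\times\{x_n\}$ and in particular forcing $L_n\to L$ as $n\to\infty$.

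The principal obstacle is the passage from the pointwise hypothesis---fixed points for the individual holonomy germs---to the global diffeomorphic projection $L_n\to L$, since a priori $L_n$ might wrap multiply around $L$ inside some arm. Item (4) of Theorem \ref{t:octopus} is exactly what prevents this: a fixed point of the entire representation $\rho_i$ forces the corresponding level of $S_i\times[0,1]$ to be a single copy of $S_i$ rather than a nontrivial cover, so the local-to-global step in the trivialization lemma goes through. This is where the suspension structure of the arms, and with it the assumption of codimension one in the $C^{1,0}$ setting, enter decisively.
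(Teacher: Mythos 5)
The paper states Dippolito's semistability theorem as a cited result from \cite{Candel-Conlon,Dippolito} and gives no proof of its own, so there is nothing in the text to compare against; I will evaluate your argument on its merits. Your outline correctly assembles the standard ingredients — octopus decomposition, the suspension structure of the arms, and Hector's trivialization lemma — but the step on which everything turns is wrong. You assert that if a leaf $L'$ met $J_n$ at two points $y\neq z$, then projecting the concatenated loop to $L$ would give a holonomy germ that \emph{moves $x_n$}, contradicting the hypothesis. This does not follow. The holonomy of the projected loop, by construction, carries $y$ to $z$; both points lie strictly between $p$ and $x_n$. An orientation-preserving germ on $T$ can perfectly well fix both endpoints $p$ and $x_n$ while moving interior points of $(p,x_n)$, so there is no contradiction with $x_n$ being a common fixed point, and the hypothesis of the trivialization lemma is not verified by this reasoning. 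This is the actual content of Dippolito's theorem: passing from common fixed points for the germinal holonomy group to the assertion that the saturation of an entire subarc $[p,x_n]$ is trivially bifoliated is nontrivial.

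A secondary, related gap is in your deduction that $\rho_i(\pi_1(S_i))$ fixes ``every $x_n$.'' The representation $\rho_i$ acts on the arm fiber $[0,1]$, not on your transversal $T$ at $p$; to transfer a fixed point of the holonomy germ group at $p$ to a fixed point of $\rho_i$ you need the holonomy transport along a path in $L$ from $p$ into the arm to be \emph{defined} at the actual point $x_n$, not merely as a germ. That is precisely the ``total holonomy vs.\ germinal holonomy'' issue that the compactness of the Dippolito nucleus and a careful choice of the decomposition are supposed to control. As written, your argument treats germ-level information as if it were automatically valid on the whole of $J_n$, and both of the weak links in the proof — the trivialization step and the arm step — stem from this conflation.
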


\begin{notation}\label{n:K Y}
Let $X$ be a neighborhood of the ends of $Y\in {\mathcal Y}$ identified (topologically) with $\bigsqcup_{i=1}^n S^3\times [0,\infty)$ such that the boundaries $\bigsqcup_{i=1}^n S^3\times
\{0\}$ are (topologically) bicollared in $Y$. Then
we have the decomposition
$$Y=K_Y \cup X$$
where $K_Y$ is
 $Y\setminus \mathring{X}$, so it is compact with boundary, and, in the case that $Y$ is not homeomorphic to $\R^4$ with finite punctures, it has non-trivial second homology by Freedman's Theorem \ref{t:freedman}, since removing a finite number of points does not change the second homology. Since the boundary is a disjoint union of (topological) $3$-spheres, it also follows that $H_2(\partial K_Y)=0$.
\end{notation}

Now we have enough information to begin to follow the line of reasoning of Ghys \cite{Ghys1}. For the rest of this section we assume that $Y\in {\mathcal Y}$ is diffeomorphic to a leaf, and we shall find some constraints.

\begin{defn}\label{d:vanishing_cycle}
We say that a leaf $L\in\FF$ {\em contains a  lacunary vanishing cycle\/} if there exists a topologically bicollared connected embedded closed oriented
$3$-manifold $\Sigma\subset L$ and a family of connected $3$-manifolds $\{\Sigma(n)\  |\ n\in\N\}$ embedded in the same leaf $L$ that are 
null-homologous on $L$ and converge to $\Sigma$ along leaves of the transverse foliation $\NN$. 
It is a {\em trivial lacunary vanishing cycle} if $\Sigma$ is null-homologous on $L$.
\end{defn}

\begin{lemma}\label{l:trivial lacunary}
Let $L$ be a simply connected leaf with an end $\mathbf{e}$ homeomorphic to $S^3\times (0,\infty)$. If $L\subset\lim_{\mathbf{e}}(L)$ then $L$ does not contain any non-trivial lacunary vanishing cycle homeomorphic to $S^3$.
\end{lemma}

To prove this, we shall use a special case of a weak generalization of Novikov's theorem on the
existence of Reeb components, Theorem 4 of \cite{Schweitzer}. Recall that a (generalized) Reeb component with connected boundary is a compact $(k+1)$-manifold with a codimension one foliation such that the boundary is a leaf and the interior fibers over the circle with the leaves as fibers. 

Suppose we are given a
compact $(k+1)$-manifold $M$ with a transversely oriented codimension one foliation $\FF$, a transverse one-dimensional foliation $\NN$,
a closed connected $(k-1)$-manifold $B$ 
and also a bifoliated map $h: B\times [a,b]\to M$, where $[a,b]$ is an interval in the real line,
such that $h(B\times\{t\})$ is contained in a leaf $L_t$ of $\FF$ for every $t\in [a,b]$ and $h_a:B\to L_a$ is an embedding
with bicollared image, where $h_t(x)=h(x,t)$. Since $h$ is bifoliated, it follows that $h_t$  is an embedding for all $t$.

\begin{thm}[See \cite{Schweitzer}, Theorem 2.13 (2)] If for every $t\in(a,b]$, $B_t=h_t(B)$ bounds a compact
connected region in $L_t$, but $B_a$ does not bound on $L_a$,
then $L_a$ is the boundary of a Reeb component whose interior leaves are the leaves $L_t$ for $t\in(a,b]$. \label{Reebcomponent}
\end{thm}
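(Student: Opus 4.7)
Plan: Let $\mathcal R$ be the connected component of $M\setminus L_a$ containing the collar $h(B\times(a,b])$; it is open and $\FF$-saturated, and contains each $W_t$, where $W_t\subset L_t$ denotes the compact connected region with $\partial W_t=B_t$. I shall argue that $\overline{\mathcal R}=\mathcal R\cup L_a$ is a Reeb component with boundary leaf $L_a$ and interior leaves $\{L_t:t\in(a,b]\}$.

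First I would show that $L_a\subset\partial\mathcal R$. Given $q\in L_a$ and a short $\NN$-arc $J$ through $q$, suppose for contradiction that $J\cap\mathcal R$ stays away from $q$. Applying the trivialization lemma (Lemma~\ref{l:trivialization}) in local product neighborhoods of $L_a$ along a path from $q$ to $B_a$, and using the compactness of each $W_t$ together with a Hausdorff-limit extraction, one would produce a compact region in $L_a$ with boundary $B_a$, contradicting the non-bounding hypothesis on $B_a$.

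Second, I would assemble the Reeb structure. Pick $q\in B_a$ and a transversal $J$ of $\NN$ at $q$; the bifoliated collar $h(B\times[a,b])$ embeds $(a,b]\hookrightarrow J\cap\mathcal R$ by $t\mapsto J\cap L_t$. Following the $\NN$-trajectory of a point of $W_b$ inside $\mathcal R$, the compactness of each $W_t$ together with the absence of extra limit leaves (see next paragraph) forces the trajectory to return to $J$ after crossing each intermediate $L_t$; the return map is the holonomy of $L_a$ along a loop in $L_a$ crossing $B_a$ transversely once, which glues the two extremes of $J\cap\overline{\mathcal R}$ into a circle. This produces a submersion $\pi\colon\mathcal R\to S^1$ with fibres the $L_t$, exhibiting $\overline{\mathcal R}$ as the required Reeb component.

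The main obstacle is to rule out extra limit leaves of $\FF$ inside $\mathcal R$, used both for the return-map argument and for the identification of the interior leaves with the $L_t$. This is handled by Dippolito's semistability theorem (Theorem~\ref{t:semistability}) applied to $\mathcal R$ together with Lemma~\ref{l:trivialization} along $\NN$: any hypothetical limit leaf $L'\subset\mathcal R$ would be semistable from the side facing $L_a$ and, combined with the compactness of each $W_t$, would force $\mathcal R$ to decompose into $\NN$-invariant saturated pieces incompatible with the bifoliated sweep of $h(B\times[a,b])$, which must reach every $W_t$ from $L_a$ inside $\mathcal R$.
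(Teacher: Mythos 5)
Note first that the paper does not prove this statement: it is quoted from \cite{Schweitzer}, Theorem~4, so there is no internal proof here to compare against. Judged on its own, your sketch has several genuine gaps. The largest is taking $\mathcal{R}$ to be the full component of $M\setminus L_a$ meeting the collar and asserting that $\overline{\mathcal{R}}=\mathcal{R}\cup L_a$ is a Reeb component. A priori $\overline{\mathcal{R}}$ need not be a manifold with boundary, its boundary need not be $L_a$ alone, and $\mathcal{R}$ may well contain leaves other than the $L_t$. The equality $\mathcal{R}=\bigcup_{t\in(a,b]}L_t$ is a \emph{consequence} of the theorem (once a compact Reeb body $R$ with $\partial R=L_a$ is exhibited, $\mathring{R}$ is open and closed in $\mathcal{R}$, hence equals it), not a legitimate starting point. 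The object that must actually be controlled is $A=\bigcup_{t\in(a,b]}W_t$ and its closure, and the heart of the matter---that the $\NN$-shadows of the $W_t$ on $L_a$ are defined, grow, and exhaust $L_a$ as $t\to a^+$, so that $A\cup L_a$ is a compact saturated set---is compressed into a one-line ``Hausdorff-limit extraction.'' Making this precise requires coping with the fact that the $\NN$-projection of $W_t$ toward $L_a$ need not exist or be injective away from a neighborhood of $B_t$, and ruling out $L_t=L_a$ for some $t$, which your definition of $\mathcal{R}$ tacitly assumes.

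Your assembly and exclusion steps also do not stand as written. The claim that an $\NN$-trajectory starting in $W_b$ ``returns to $J$ after crossing each intermediate $L_t$'' is precisely the spiraling one has to \emph{construct}; without first establishing nestedness of the $W_t$'s under $\NN$-projection it is not even clear the trajectory stays inside $A$. And Dippolito's semistability theorem (Theorem~\ref{t:semistability}) is the wrong lever for excluding extra leaves from $\mathcal{R}$: its hypothesis is a sequence of common fixed points of the full holonomy group converging to a semiproper leaf, which you have not produced for a hypothetical limit leaf $L'\subset\mathcal{R}$, and its conclusion (nearby leaves projecting diffeomorphically onto $L'$) does not yield the incompatibility you assert. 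Once $A\cup L_a$ is shown to be a compact Reeb body, no exclusion step is needed---the open-and-closed observation above then gives $\mathcal{R}=\bigcup_t L_t$ automatically.
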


\begin{proof} [Proof of Lemma~\ref{l:trivial lacunary}]
Suppose that $L\subset\lim_{\mathbf{e}}(L)$ and that
$\Sigma\subset L$ is a non-trivial lacunary vanishing cycle on $L$ that is homeomorphic to $S^3$.
Since $\FF$ is assumed to be transversely oriented, the transverse foliation $\NN$ defines a  map 

$$\Phi:S^3\times\R\to M\ $$ 
that takes each set $S^3 \times \{t\}$ into a leaf of $\FF$ and such that $\Phi(S^3\times\{0\})=\Sigma$. 
Let $\Phi^\ast(\FF)$ be the pullback foliation on $S^3\times\R$ (recall that this is only a $C^0$ foliation) and set $\Sigma_t=\Phi_t(S^3)$. 
Since the cycle is a lacunary vanishing cycle contained in $L$ and $L\subset\lim_\mathbf{e}(L)$, it follows (possibly after reversing the sign of $\mathbb R$) that there exists a decreasing 
sequence $t_n\to 0$, $n\in\N$, so that each $\Sigma_{t_n}\subset L$ and $\Sigma_{t_n}$ bounds a manifold $C_n\subset  L$. Now $C_n$ must be simply connected 
by Van Kampen's Theorem, since both $S^3$ and $L$ are, so the manifolds $C_n$ lift to nearby leaves. By continuation from $C_1$, using 
Reeb stability, there exists a minimal $a\in\R\cup\{-\infty\}$ so that $\Sigma_{t}$ bounds a manifold homeomorphic to $C_1$ for all $a<t\leq t_1$.

If $a<0$ the lacunary vanishing cycle would be trivial, so $0\leq a<t_1$. Then by the preceding theorem with $B=S^3$ and $k=4$, $\Sigma_a$ will be contained in the compact boundary leaf of a generalized Reeb component and $L$ is an interior leaf of that component, so $L$ cannot meet $\Sigma_t$ for $t\leq a$, contradicting the hypothesis that $\Sigma_0\subset L$.
\end{proof}

\begin{prop}
Let $\FF$ be a codimension one $C^{1,0}$ foliation in a compact $5$-manifold $M$. If there exists a leaf $L$ of $\FF$ diffeomorphic to $Y\in {\mathcal Y}$, then $L$ is a proper leaf without holonomy. \label{p:proper}
\end{prop}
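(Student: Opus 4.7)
Since $L\cong Y$ is simply connected, its holonomy group is a quotient of $\pi_1(L)=1$, so the "no holonomy" claim is immediate; the content of the proposition lies entirely in properness. I argue by contradiction, in the spirit of Ghys' procedure~\cite{Ghys1}, and suppose $L$ is non-proper. Then along a local transversal $T$ through some $p\in L$ the set $L\cap T$ accumulates at $p$; pick $p_n\in(L\cap T)\setminus\{p\}$ with $p_n\to p$ in $M$. Each $p_n$ must escape to infinity in the intrinsic topology of $L$ (otherwise a subsequence would converge to $p$ in $L$, contradicting $p_n\ne p$), and since $L$ has only finitely many ends, after passing to a subsequence all $p_n$ converge to a single end $e$ of $L$.

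Choose paths $\gamma_n\subset L$ from $p$ to $p_n$. Simple connectedness of $L$ makes the leaf-holonomy germ $H_n\colon(T,p)\to(T,p_n)$ associated with $\gamma_n$ depend only on its endpoints, and because $p_n\to p$, for large $n$ the germ $H_n$ is so close to the identity that it can be iterated on a fixed interval neighborhood of $p$ in $T$. The plan is to promote this germ-level iteration to an honest smooth self-diffeomorphism $\phi$ of a neighborhood of $e$ in $L$, whose positive iterates $\phi^k$ form a neighborhood basis of $e$, thus exhibiting $e$ as a smoothly periodic end in the sense of the definition preceding Theorem~\ref{t:Taubes}. For this I apply Dippolito's octopus decomposition (Theorem~\ref{t:octopus}) to the saturated open set $U=M\setminus(\overline L\setminus L)$, writing $\widehat U=K\cup B_1\cup\dots\cup B_m$. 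The end $e$ lies in some arm $B_i=S_i\times[0,1]$, and simple connectedness of $L$ forces the connected component of $L\cap B_i$ carrying $e$ to be the universal cover $\tilde S_i$ of $S_i$. A terminal segment of $\gamma_n$ descends to a loop $\alpha\subset S_i$ whose suspension holonomy realizes $H_n$, and lifting $\alpha$ successively to $\tilde S_i$ while invoking Hector's trivialization lemma (Lemma~\ref{l:trivialization}) at each step assembles the lifts into the sought $\phi$.

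With smooth periodicity of $e$ in hand, I split on the topology of $e$. If $e$ is one of the exotic ends of $Y$ (diffeomorphic to the end of $\natural^k\mathbf{R}$, of $\natural^k\mathbf{\mathring{K}}_t$, or of $\mathbf{\mathring{K}}_t\natural\mathbf{R}_\infty$), iterating $\phi$ yields an infinite nested family of pairwise diffeomorphic regions of $\mathbf{K}_t$-type inside $e$, contradicting the non-diffeomorphism statement in Theorem~\ref{t:Taubes}. If instead $e$ is standard, then $Y$ must carry some other, exotic end, so $L$ has at least two ends; choose a bicollared $\Sigma\cong S^3$ in $L$ inside $e$ separating $e$ from the remaining ends, so that $\Sigma$ is non-null-homologous in $L$. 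The $\NN$-transports $\Sigma^{(n)}\subset L$ of $\Sigma$ to the plaques of $L$ through the points $p_n$ sit inside a finite union of biregular charts of a single plaque-configuration of $L$ deep inside the end to which the $p_n$ converge, hence bound balls in $L$ and are null-homologous, and $\Sigma^{(n)}\to\Sigma$ along $\NN$ by construction. This is a vanishing cycle forbidden by Proposition~\ref{p:non-vanishing}. In either case non-properness produces a contradiction, so $L$ is proper.

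The principal technical obstacle is the second step above: upgrading the germ-level iteration of $H_n$ on the transversal to a globally defined smooth self-diffeomorphism $\phi$ of a full end-neighborhood of $e$ in $L$. This requires combining Dippolito's arm structure, the simple connectedness of the component of $L\cap B_i$ carrying $e$, and the trivialization lemma precisely enough to confirm that the infinite family of lifts of $\alpha$ to $\tilde S_i$ genuinely tiles a neighborhood of $e$, rather than producing only a sequence of nearby but disjoint plaques that would fail to realize an honest smoothly periodic end.
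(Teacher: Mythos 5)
Your ``no holonomy'' observation and the reduction to a single accumulating end $e$ are fine, but the central step of your properness argument---producing a smoothly periodic end from non-properness---does not go through, and in fact you already flag it as the ``principal technical obstacle.'' The obstacle is not merely technical. If $L$ is non-proper it is either locally dense or lies in an exceptional minimal set, and in both situations $\overline{L}\setminus L$ is \emph{dense} in $\overline{L}$; hence $U=M\setminus(\overline{L}\setminus L)$ is not an open set (points of $L$ are not interior to it), so Dippolito's octopus decomposition (Theorem~\ref{t:octopus}) cannot be applied to $U$ at all. More conceptually, the recurrences of a non-proper leaf are governed by a whole holonomy pseudogroup with no privileged generator, so there is no reason a single germ $H_n$ should integrate to a global self-diffeomorphism whose iterates tile a neighborhood of $e$; the periodicity picture you are invoking is precisely the one that becomes available only \emph{after} properness is known and the union of leaves diffeomorphic to $Y$ is shown to fiber over $S^1$ (this is how the paper uses periodicity, in Lemma~\ref{l:non-compact}, not here). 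Your ``standard end'' branch has a second gap: the transverse copies $\Sigma^{(n)}$ need not bound balls---each may well separate $e$ from the remaining ends, in which case it is non-null-homologous and there is no vanishing cycle; you would then need a separate argument, and because $\Sigma^{(n)}$ sits in the \emph{standard} end it does not bound the engulfing index of the exotic end either.

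The paper's actual proof avoids periodicity entirely and is much more direct. It applies Reeb stability to a bifoliated product neighborhood of a well-chosen compact set in $L$: if $L$ were non-proper it would re-enter that neighborhood, producing a transverse copy of the compact set disjoint from the original one inside $L$. When $H_2(Y)\neq0$ this is killed by a Mayer--Vietoris argument, since $i_{0*}:H_2(K_Y)\to H_2(L)$ is an isomorphism and the extra copy would force an additional $H_2$ summand. When $Y$ is a punctured $\R^4$ with finite Taylor index, infinitely many disjoint transverse copies of $\mathbf{K}_s$ would force $\gamma(Y)=\infty$ by Proposition~\ref{p:infinite_engulfing}, contradicting the embedding into $\#^{3k}S^2\times S^2$. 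When the Taylor index is infinite, the paper chooses $Q$ (homeomorphic but not diffeomorphic to $S^3\times[0,1]$) \emph{inside the exotic end}, not the standard one, and then splits precisely on whether the transverse copies $Q\times\{t_n\}$ separate: the non-separating case gives a vanishing cycle forbidden by Proposition~\ref{p:non-vanishing}, and the separating case bounds the engulfing index of the exotic end, contradicting $c(Y)=\infty$. Your outline has none of these three arguments; to repair your proof you would essentially need to abandon the periodicity strategy and rebuild it along these lines.
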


\begin{proof}
Since $L$ is simply connected, it is a leaf without holonomy. We also
observe that $L$ has a saturated neighborhood not meeting any compact leaves, since a limit leaf of compact leaves is compact (see \cite{Haefliger} or Theorem 6.1.1 in \cite{Candel-Conlon}).

First consider the case that $H_2(Y)\neq 0$. Let $K_L\subset L$ be a compact connected submanifold diffeomorphic to $K_Y$ (see notation \ref{n:K Y}). By Reeb stability there exists a neighborhood $U$ of $K_L$ bifoliated homeomorphically as a product. If $L$ meets $U$ in more than one connected component then there exists a compact subset $B\subset L$ homeomorphic to $K_L$ (via the transverse projection in $U$) and disjoint from $K_L$. This is impossible since the inclusion $i_0: K_L\hookrightarrow L$ induces an isomorphism
$i_{0*}: H_2(K_L)\to H_2(L)$ and the Mayer-Vietoris sequence applied to $(K_L\cup B) \bigcup (L\setminus (\mathring{K}_L\cup \mathring{B})$ shows that $B$ would give an additional non-trivial summand in $H_2(L)$. So in this case $L$ is a proper leaf.

Let us consider now the case where $Y\in\mathcal{R}\cup\overline{\mathcal{R}}$ is an exotic $\R^4$ (with finite Taylor index). Let $t>r_{\psi_\mathbf{R}}$ and let $D\subset L$ be a exotic disk diffeomorphic to $\mathbf{K}_t$. Since $\FF$ is $C^{1,0}$, by Reeb stability $D$ lifts to disks on nearby leaves such that their interiors are $C^1$-diffeomorphic to  $\mathring{\mathbf{K}}_t$.
If $L$ is non-proper then Reeb stability will produce infinitely many pairwise disjoint copies of $\mathbf{K}_t$ embedded in $Y$, so $\gamma(Y)=\infty$ by Proposition~\ref{p:TaylorR} which gives a contradiction.

Finally consider the case that $H_2(Y)=0$ and $Y$ has at least two ends (so it is homeomorphic to $\R^4$ with at least one puncture). Let $\mathbf{e}$ be an end of $Y$ diffeomorphic to the end of some $M\in\mathcal{M}$ and recall that for all $n\in\N$, $M=K_n\cup\mathbf{C}_n\cup X_n$ (see Remark~\ref{r:good-compact}), where $X_n$ is a neighborhood of the end of $M$, $K_n$ is a compact region and $\mathbf{C}_n$ is a topological compact cylinder bounding both manifolds. If $L$ is not proper,
since the number of ends of $Y$ is finite, there exists an end $\mathbf{e'}$ such that $L\subset\lim_{\mathbf {e'}}(L)$. 
By hypothesis, for some sufficiently large $k$, there exists a simply connected compact set $C$ diffeomorphic to $\mathbf{C}_{k}$ embedded in a neighborhood of $\mathbf{e}$ and separating the end $\mathbf{e}$ from the other ends of $L$, such that $\mathring{K}_k\in\mathcal{M}$. By Reeb stability there exists a neighborhood of $C$ bifoliated as a product. Thus there exists a smooth bifoliated embedding $j:C\times(-1,1)\to\FF$ so that $j(C\times\{0\}) = C$. As above, the projection of a tangential leaf to another in this neighborhood is a $C^1$ diffeomorphism. Thus $L\cap j(C\times(-1,1))$ contains a non-trivial sequence of tangential fibers $j(C\times\{s_m\})$, with ${s_m}$ tending to $0$, all of them contained in a neighborhood $X_\mathbf{e'}$ of the recurrent end $\mathbf{e'}$.

By Lemma~\ref{l:trivial lacunary}, for all sufficiently large $m$, $j(C\times \{s_m\})$ disconnects $\mathbf{e'}$ from the other ends. Otherwise infinitely many $j(C\times\{s_m\})$ would bound disks and $L$ would contain a non-trivial lacunary vanishing cycle homeomorphic to $S^3$. 

Note that $C$ has two boundary components, corresponding to $\partial^-\mathbf{C}_k$ and $\partial^+\mathbf{C}_k$. 
We say that $j(C\times \{s_m\})$ is {\em positively oriented\/} if $\mathbf{e'}$ is an end of the connected component of $L\setminus j(\mathring{C}\times \{s_m\})$ which contains the component of $\partial C$ corresponding to $\partial^+\mathbf{C}_k$; otherwise we say that it is {\em negatively oriented\/}. 

Let $s_1,s_2$ be distinct values in the sequence $\{s_m\}_m\in\N$ (possibly changing the subscripts), sufficiently close to $0$ so that both $j(C\times\{s_1\})$ and $j(C\times\{s_2\})$ disconnect $\mathbf{e'}$ from the other ends and such that both have the same orientation.
Let $N=j(\mathring{C}\times\{s_1\})\cup P\cup j(\mathring{C}\times\{s_2\})$, where $P$ is the connected component of $L\setminus \left(j(\mathring{C}\times\{s_1\})\cup j(\mathring{C}\times\{s_2\})\right)$ that meets both $C\times\{s_1\}$ and $C\times\{s_2\}$.
Thus $N$ is an exotic compact cylinder in $X_{\mathbf{e'}}$ that contains both $j(\mathring{C}\times\{s_1\})$ and $j(\mathring{C}\times\{s_2\})$.

We can assume that $j(C\times\{s_1\})$ contains the {\em negative\/} boundary of $N$, i.e. the boundary component that corresponds to $\partial^-\mathbf{C}_k$ and $j(C\times\{s_2\})$ contains the {\em positive\/} boundary of $N$, i.e. the boundary component that corresponds to $\partial^+\mathbf{C}_k$. Let us consider the following manifold: \[W=K_k\cup_{i_-}N\cup_{i}N\cup_{i}\dots\;,\] where $i_-$ is an orientation preserving diffeomorphism which maps $\mathring{\mathbf{C}}_k$ to $j(\mathring{C}\times\{s_1\})$, $i$ is another orientation preserving diffeomorphism from $j(\mathring{C}\times\{s_2\})$ to $j(\mathring{C}\times\{s_1\})$. Clearly $W\in\mathcal{M}$ since $\mathring{K}_k\in\mathcal{M}$. But this is in contradiction with Taubes' theorem, which implies that no element in $\mathcal{M}$ is smoothly periodic (Remark~\ref{r:TaubesM}).
\end{proof}

\begin{prop}\label{p:product_neighborhood}
If there exists a leaf $L$ diffeomorphic to $Y$, then there exists an open $\FF$-saturated neighborhood $U$ of $L$ which is diffeomorphic to $L\times(-1,1)$ by a diffeomorphism which carries the bifoliation $\FF$ and $\NN$ to the product bifoliation. In particular, all the leaves of $\FF_{|U}$ are diffeomorphic to $Y$.
\end{prop}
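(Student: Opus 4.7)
The plan is to apply Hector's Trivialization Lemma \ref{l:trivialization} to a suitable transversal arc at a point $p\in L$; this requires producing an arc $J$ at $p$ such that every leaf of $\FF$ meets $J$ in at most one point. By Proposition \ref{p:proper}, $L$ is proper without holonomy, so I fix $p\in L$ and a transversal arc $J_0$ at $p$ with $J_0\cap L=\{p\}$. Since $L$ is simply connected it is trivially semistable on both sides (every point of $J_0$ is fixed by every holonomy map of $L$), so Dippolito's semistability theorem \ref{t:semistability} produces sequences of leaves approaching $L$ from both sides which project diffeomorphically onto $L$ via $\NN$.

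To upgrade these sequences to a full one-parameter family, I would exhaust $L$ by compact submanifolds $K_Y\subset K_1\subset K_2\subset\cdots$, where $K_Y$ is the compact core coming from the decomposition $Y=K_Y\cup X$. For each $n$, Reeb stability applied to the simply connected compact $K_n$ (with trivial holonomy) yields a subinterval $J_n\subset J_0$ around $p$ such that $K_n\times J_n$ sits in $M$ as a bifoliated product, and these subintervals can be arranged to be nested, $J_n\supset J_{n+1}$. The key claim is that $J=\bigcap_n J_n$ contains an open neighborhood of $p$. Assuming this, each leaf through $J$ embeds every $K_n$ diffeomorphically in a compatible way, so in the limit every such leaf contains an open subset diffeomorphic to $L$; the specific end structure of $Y$ (controlled by the invariants of Section~1) together with Proposition \ref{p:non-vanishing} forces this open subset to exhaust the whole leaf, and then Hector's Trivialization Lemma provides the desired bifoliated diffeomorphism of the saturation $U$ of $J$ with $L\times J\cong L\times(-1,1)$, with all leaves of $\FF|_U$ diffeomorphic to $Y$.

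The main obstacle is the nonemptiness of the interior of $\bigcap_n J_n$. If this failed, a diagonal sequence $s_m\to p$ with $s_m\notin J_{n(m)}$ would yield leaves $L_{s_m}$ whose $\NN$-projection to $L$ fails somewhere on $K_{n(m)}$. I would argue that such a failure forces one of two incompatible behaviors: either $L_{s_m}$ folds back to meet $J_0$ at a second point near $p$, producing in the limit an embedded $S^3$ vanishing cycle on $L$ in contradiction to Proposition \ref{p:non-vanishing}, or the end of $L_{s_m}$ deviates from the exotic end of $L$, contradicting the rigidity of the Taylor-index and engulfing-index invariants from Section~1 applied to the large bifoliated pieces $K_n\times J_n$. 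This end-rigidity step is the delicate part, since the exotic ends of $L$ are not smoothly $S^3\times[0,\infty)$ and so the extension of the product bifoliation past $K_Y$ cannot proceed by naive fiberwise coordinates at infinity; the control provided by Section~1 together with Proposition \ref{p:non-vanishing} is precisely what rules out the pathological accumulations.
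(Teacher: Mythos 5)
Your route is genuinely different from the paper's, and it has a gap exactly where you flagged one. The paper takes the saturation $U$ of a one-sided transversal arc at $L$ and applies Dippolito's octopus decomposition to $\hat U$, identifies $L$ as a boundary leaf, enlarges the nucleus $K$ so that $\partial^\tau K\cap L$ is the compact core $K_Y$, applies Reeb stability once, to the single compact piece $K_Y$, and then observes that each arm $B_i$ meeting $L$ has base $S_i$ diffeomorphic to the simply connected end $\mathring X_i\cong S^3\times(0,\infty)$. Hence the suspension homomorphism $\pi_1(S_i)\to\Homeo([0,1])$ is trivial, so each $B_i$ is already a bifoliated product, and the finitely many product pieces glue to a one-sided product neighborhood; the other side is handled symmetrically. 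There is no limiting or diagonal argument at all.

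The gap in your proposal is precisely the ``nonemptiness of the interior of $\bigcap_n J_n$.'' That is exactly what needs to be proved, and it fails in general for a proper leaf without holonomy: Reeb stability over a compact exhaustion typically yields intervals shrinking to a point whenever nearby leaves are not diffeomorphic to $L$ (for instance when $L$ spirals into a compact limit leaf). Your attempted resolution does not close this. In the ``fold back'' branch you assert an embedded $S^3$ vanishing cycle, but a returning plaque of $L_{s_m}$ need not close up along a sphere --- the $3$-manifolds bounding $K_n$ for large $n$ are not $S^3$'s, and the failure of the projection could happen interior to an end without ever producing a null-homologous $3$-sphere in $L$. In the ``end deviation'' branch, the Taylor and engulfing indices constrain diffeomorphism types of domains in a single smooth $4$-manifold; they say nothing directly about how a nearby leaf can cease to project diffeomorphically onto $L$ over a larger compact, which is a statement about the bifoliated germ, not about invariants of one leaf. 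What eliminates this issue in the paper is the observation that the arm bases are simply connected, forcing the arm suspensions to be trivial immediately; your argument has no substitute for that step, and I do not see how to repair it without reintroducing it.
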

\begin{proof}
Since $L$ is a proper leaf, there exists a path, $c:[0,1)\to M$, transverse to $\FF$, with positive orientation and such that $L\cap c([0,1))=\{c(0)\}$. Let $U$ be the saturation of $c((0,1))$, which is a connected saturated open set and consider the octopus decomposition of $\hat{U}$ as described in Theorem~\ref{t:octopus}. Clearly one of the boundary leaves of $\hat{U}$ is diffeomorphic to $L$ because it is proper without holonomy and $c(0)\in L$. We identify this boundary leaf with $L$ and extend the nucleus $K$ so that the set $K'=\partial^\tau K\cap L$ is homeomorphic to $K_Y$. By Reeb stability, there exists a neighborhood of $K'$ foliated as a product by $K_Y\times\{\ast\}$. Since $L\subset\partial\hat U$ has an end, there is an arm $B_1$ that meets $L$. The corresponding $S_1$ is diffeomorphic to a neighborhood of an end and homeomorphic to $S^3\times (0,\infty)$, thus $B_1$ is foliated as a product (i. e., the suspension must be trivial). The union of
a smaller product neighborhood of $L\cap B_1$
and the product neighborhood of $K_Y$ meeting $L$ gives a product neighborhood on the positive side of $S_1\cup K_Y$. We can proceed in the same way for all the ends (which are finitely many), thus obtaining a product neighborhood on the positive side of $L\equiv K_Y\cup S_1\cup\dots\cup S_k$.

Proceeding in the same way on the negative side of $L$ we can find the desired product neighborhood of $L$. Each leaf is clearly diffeomorphic to $Y$ since the projection to $L$ along leaves
of $\NN$ is a local diffeomorphism and bijective by the product structure.
\end{proof}

Let $\Omega$ be the union of leaves diffeomorphic to $Y$. By the previous Proposition this is an open set on which the restriction $\FF_{|\Omega}$ is defined by a locally trivial fibration, so its leaf space is homeomorphic to a (possibly disconnected) $1$-dimensional manifold. Let $\Omega_1$ be one connected component of $\Omega$.

\begin{lemma}\label{l:non-compact}
The completed manifold $\hat{\Omega}_1$ is not compact.
\end{lemma}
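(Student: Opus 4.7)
The plan is to argue by contradiction: suppose that $\hat{\Omega}_1$ is compact. First I would apply the octopus decomposition (Theorem~\ref{t:octopus}) to the connected saturated open set $\Omega_1$. Since the arms $B_i$ of the decomposition are non-compact by construction, the compactness of $\hat{\Omega}_1$ forces the decomposition to have no arms, so $\hat{\Omega}_1 = K$ is a compact manifold with corners whose tangential boundary $\partial^\tau\hat{\Omega}_1$ is a finite union of pieces of boundary leaves of $\FF$ and whose transverse boundary is empty.

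Next I would extract a smoothly periodic structure on an exotic end of $Y$. Fix a $Y$-leaf $L\subset\Omega_1$ and one of its exotic ends $X_1$, which by the definition of $\mathcal{Y}$ is diffeomorphic to the end of $\natural_{i=1}^k\mathbf{R}$, of $\natural_{i=1}^k\mathring{\mathbf{K}}_t$, or of $\mathbf{\mathring{K}}_t\natural\mathbf{R}_\infty$. Using the product neighborhood from Proposition~\ref{p:product_neighborhood} together with the trivialization lemma~\ref{l:trivialization} applied to a transverse $\NN$-arc through a point of $X_1$, I would construct a diffeomorphism $h:V\to V$ of a neighborhood $V$ of $X_1$ such that $\{h^n(V)\}$ forms a neighborhood base for the end, that is, $X_1$ becomes a smoothly periodic end in the sense of the earlier definition.

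The final step is to invoke Taubes' theorem~\ref{t:Taubes}: for the exotic end structures considered, the compact sets $\mathbf{K}_t$ are pairwise non-diffeomorphic for $t>r_0$, which is incompatible with smooth periodicity. This contradicts the conclusion of the previous step and shows that $\hat{\Omega}_1$ cannot be compact.

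The main obstacle is the construction of $h$, which splits into two cases according to the leaf space of $\Omega_1$ (a connected $1$-manifold without boundary). When the leaf space is $S^1$, the monodromy of the $Y$-bundle $\Omega_1\to S^1$—whose fibers should acquire a smooth compactification via the boundary leaves of the compact $\hat{\Omega}_1$—would yield $h$ as a collar shift near the newly added boundary. When the leaf space is an open interval, the argument combines compactness with Dippolito's semistability theorem~\ref{t:semistability} to extract a periodic collar near one of the two limiting boundary leaves. In the sub-case of infinite Taylor-index ends ($\mathbf{R}_\infty$-type), an alternative contradiction comes from Proposition~\ref{p:infinite_engulfing}: the recurrence forced by compactness of $\hat{\Omega}_1$ would produce arbitrarily many pairwise disjoint diffeomorphic copies of $\mathbf{K}$ sitting in a compact region of the leaf, incompatible with the finite ambient topology guaranteed there.
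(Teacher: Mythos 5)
Your high-level plan --- assume $\hat{\Omega}_1$ is compact, extract a smoothly periodic structure on an exotic end, and contradict Taubes' theorem (or the infinite engulfing index in the $\mathbf R_\infty$ case) --- matches the paper. However, the crucial step, the actual construction of the contraction $h$, is where your argument has a genuine gap, and the paper takes a cleaner route.

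You attempt to build $h$ from the monodromy of the bundle $\Omega_1\to(\text{leaf space})$, with a case split according to whether the leaf space is $S^1$ or an open interval. But the monodromy is a global self-diffeomorphism of $Y$, and it is not clear a priori that some neighbourhood $V$ of the exotic end satisfies $h(V)\subsetneq V$ with $\{h^n(V)\}$ a neighbourhood basis of that end; the phrase ``acquire a smooth compactification via the boundary leaves'' is precisely the point that needs a proof, and invoking the semistability theorem in the interval case does not close it either (semistability produces leaves projecting onto $L$, not a contraction of $L$'s end). The paper instead argues directly from the boundary: since $\hat\Omega_1$ is compact, the limit set of the exotic end of $L$ contains a minimal set of the foliation, which must be a compact leaf $F\subset\partial\hat\Omega_1$. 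Because the nearby leaves are all proper and diffeomorphic to $Y$, the holonomy group of $F$ acts without fixed points and with proper orbits, hence is infinite cyclic, generated by a contraction $h$. Following the flow of $\NN$ towards $F$ turns this holonomy contraction into an embedding $h:V\to V$ of a neighbourhood $V\subset X_1$ of the exotic end with $\{h^n(V)\}$ a neighbourhood basis --- exactly the smoothly periodic structure needed. This avoids the dichotomy on the leaf space entirely (which, in the paper, is only settled afterwards in Proposition~\ref{p:non-R}). Your opening observation via the octopus decomposition that compactness forces the nucleus to be all of $\hat\Omega_1$ is correct but is not used in what follows and can be omitted; the paper instead notes at the outset that $\partial\hat\Omega_1\neq\emptyset$, since otherwise all leaves would be proper and non-compact, while a foliation of a compact manifold whose leaves are all proper necessarily has a compact leaf.

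Your handling of the $\mathbf R_\infty$ sub-case is in the right spirit but slightly off target: the clean contradiction is simply that a smoothly periodic end has finite engulfing index (the periodicity pushes a fixed disconnecting $3$-submanifold arbitrarily far out), whereas $\mathbf{\mathring K}_t\natural\mathbf R_\infty$ has infinite engulfing index by Proposition~\ref{p:Ganzell}; one does not need to produce disjoint copies of $\mathbf K$ ``in a compact region.''
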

\begin{proof}
First we note that $\partial\hat{\Omega}_1$ cannot be empty, for otherwise all the leaves would be diffeomorphic to $Y$, hence proper and non-compact. It is a well known fact (see, e.g., \cite{Candel-Conlon}) that a foliation in a compact manifold with all leaves proper must have a compact leaf, for every minimal set of such a foliation is a compact leaf.

Now suppose that $\hat{\Omega}_1$ is compact and let $L$ be a leaf diffeomorphic to $Y$.
Take an exotic end $\mathbf{e}$ of $L$ that is
end-diffeomorphic to an element in $\mathcal{M}$. Then the limit set of $\mathbf{e}$ of $L$ contains a minimal set, which must be contained in the boundary of $\hat{\Omega}_1$ and must be a compact leaf. The holonomy of the leaf $F$ is not the identity and has no fixed points  (otherwise it would produce non-trivial holonomy on an interior leaf). Since all the orbits are proper, the holonomy group of each boundary leaf must be isomorphic to $\Z$.

The contracting map that generates the holonomy of $F$ extends to a bifoliated map $h':X_1\to X_1$ that preserves each leaf of both $\FF$ and $\NN$ on a neighborhood $X_1$ of $F$ in $\hat{\Omega}_1$ (just by following the flow $\NN$ in the direction towards $F$). Since the holonomy is cyclic, each connected component of $L\cap X_1$ is end-diffeomorphic to an end of a cyclic covering space of $F$, so there exists an open neighborhood $V$ of $\mathbf{e}$ in $L$ where $h'$ is defined such that $V\subset X_1$ and $h=h'|_V$ is an embedding $h:V\to h(V)\subset V$ so that $\{h^n(V)\}\ (n\geq 0)$ is a neighborhood base of the end $\mathbf{e}$. But this contradicts Taubes' Theorem~\ref{t:Taubes} and Remark~\ref{r:TaubesM} which implies that $\mathbf{e}$ cannot be smoothly periodic.
\end{proof}

Following the approach of Ghys in \cite{Ghys1}, we have a dichotomy: the leaf space of $\FF_{|\Omega_1}$, which is a connected $1$-dimensional
manifold, must be either $\R$ or $S^1$.

\begin{prop}\label{p:non-R}
The leaf space of $\FF_{|\Omega_1}$ cannot be $\R$.
\end{prop}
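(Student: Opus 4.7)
The plan is to mimic the argument of Lemma~\ref{l:non-compact}, applied now to a boundary leaf of $\hat\Omega_1$ that must appear because the leaf space has ends at $\pm\infty$. Assume for contradiction that the leaf space of $\FF|_{\Omega_1}$ is $\R$. Since the leaves of $\Omega_1$ are simply connected, the fibration over $\R$ is trivial, so $\Omega_1\cong L\times\R$ as a bifoliated manifold with $L\cong Y$, and in an octopus decomposition $\hat\Omega_1 = K\cup B_1\cup\cdots\cup B_m$ each arm $B_j\cong S_j\times[0,1]$ carries the product foliation. In particular, the tangential slab $S_{j_1}\times\{1\}$ of the arm $B_{j_1}$ capturing the exotic end of $L$ sits inside a boundary leaf $F$ of $\hat\Omega_1$ on the $+\infty$ side, on which the leaves $L\times\{t\}$ accumulate as $t\to+\infty$; and $F\notin\Omega_1$.

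We examine the holonomy of $F$ on the $\Omega_1$-side. If it is trivial, then $F$ is trivially semistable on that side, and Dippolito's theorem (Theorem~\ref{t:semistability}) produces leaves $L_n\to F$ from $\Omega_1$ projecting diffeomorphically onto $F$ along $\NN$. Hence $F\cong L_n\cong Y$, so $F\in\Omega$; Proposition~\ref{p:product_neighborhood} would then give $F$ a product neighborhood consisting of leaves $\cong Y$, and by connectedness with the nearby leaves $L_n\subset\Omega_1$ this forces $F\in\Omega_1$, contradicting that $F$ is a boundary leaf. Therefore the holonomy of $F$ on the $\Omega_1$-side is non-trivial; and exactly as in Lemma~\ref{l:non-compact}, a fixed point of this holonomy would transport non-trivial holonomy to an interior simply connected leaf $L'\cong Y$ (by lifting the defining loop through the fixed point to a loop in $L'$ carrying the same germ), which is impossible. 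Hence the holonomy acts freely on a one-sided transversal and admits a contracting generator $h$.

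The conclusion now follows exactly as in the last paragraph of Lemma~\ref{l:non-compact}: selecting the loop generating $h$ with basepoint inside $S_{j_1}\times\{1\}\subset F$ and combining $h$ with the $\NN$-flow toward $F$ (which near this slab follows the arm's $[0,1]$-direction by construction of the octopus decomposition), we obtain an open neighborhood $V\subset L$ of the exotic end of $L$ together with an embedding $h:V\to V$ whose iterates $\{h^n(V)\}_{n\ge 0}$ form a neighborhood base of that end. The exotic end of $L\cong Y$ is thus smoothly periodic, which contradicts Theorem~\ref{t:Taubes} in the $\natural^k\mathbf{R}$ and $\natural^k\mathring{\mathbf{K}}_t$ cases (via a punctured simply connected $4$-manifold with intersection form $-k(E_8\oplus E_8)$), and contradicts the infinite engulfing index of Remark~\ref{r:infinite index} in the $\mathring{\mathbf{K}}_t\natural\mathbf{R}_\infty$ case, since smoothly periodic ends have finite engulfing index.

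The main obstacle is ensuring the existence of a loop in $F$ with basepoint in the exotic slab $S_{j_1}\times\{1\}$ that actually has non-trivial holonomy on the $\Omega_1$-side: if $S_{j_1}\times\{1\}$ is itself simply connected, one uses a loop in $F$ whose basepoint lies in the slab but that extends into the nucleus part of $F$, and verifies that the resulting $h$ still moves the exotic-end neighborhood into itself because the $\NN$-flow started in that slab must re-enter the exotic arm before reaching $F$ again.
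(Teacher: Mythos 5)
Your proof is correct in its conclusion, but it takes a genuinely different (and longer) route than the paper's, and the second case in your dichotomy never actually arises, which is worth understanding.

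The paper's proof is short: since $\hat\Omega_1$ is non-compact (Lemma~\ref{l:non-compact}) there is an arm $B_1\cong S_1\times[0,1]$, and since the leaf space of $\FF|_{\Omega_1}$ is $\R$, the transverse fiber $\phi_1(\{\ast\}\times(0,1))$ meets each leaf at most once. The Trivialization Lemma~\ref{l:trivialization} then makes its saturation a product $L\times(0,1)$, and the completion extends this to $L\times[0,1)$, exhibiting a boundary leaf of $\hat\Omega_1$ diffeomorphic to $Y$ — contradicting the fact, via Proposition~\ref{p:product_neighborhood}, that leaves diffeomorphic to $Y$ are interior to $\Omega$. Your Case~1 (trivial holonomy $\Rightarrow$ Dippolito semistability $\Rightarrow F\cong Y\Rightarrow$ contradiction) reaches the same conclusion $F\cong Y$ by a different tool; it is valid, granting the small observations that $F$ is semiproper on the $\Omega_1$-side and that the leaves $L_n$ from Theorem~\ref{t:semistability} lie in $\Omega_1$ for large $n$.

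However, your Case~2 is vacuous, and your argument there has gaps. Notice that the very hypothesis you are trying to contradict — the leaf space of $\FF|_{\Omega_1}$ is $\R$ — forces the one-sided holonomy of any boundary leaf $F$ of $\hat\Omega_1$ to be trivial: a one-sided transversal $T$ through $x_0\in F$ has $T\setminus\{x_0\}\subset\Omega_1$ transverse to $\FF|_{\Omega_1}$, hence its projection to the leaf space $\R$ is a strictly monotone map, so $T$ meets each leaf at most once; if some holonomy germ $h_\gamma$ were non-trivial, it would move some $p\in T$ to a distinct point of $T$ on the same leaf, contradicting injectivity. So you never reach a non-trivial holonomy. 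Moreover, the argument you import from Lemma~\ref{l:non-compact} — that the holonomy is free, cyclic, and contracting — relies in that lemma on $F$ being a \emph{compact} leaf (a minimal set of a compact $\hat\Omega_1$), whereas by the time you are in Proposition~\ref{p:non-R} you already know $\hat\Omega_1$ is non-compact and $F$ need not be compact; the cyclic/contracting conclusion would need a separate argument for non-compact $F$. The upshot: your proof is correct because Case~1 is what always occurs, but a cleaner version of your argument would first observe that the $\R$-fibration kills the boundary holonomy outright (eliminating Case~2), and then note that this either feeds into Dippolito semistability as you did, or, as the paper does, feeds directly into the Trivialization Lemma.
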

\begin{proof}
Since $\hat{\Omega}_1$ is not compact there exists at least one arm for its octopus decomposition. Let $B_1$ be such an arm that is diffeomorphic to $S_1\times[0,1]$ via a diffeomorphism $\phi_1$ carrying the vertical foliation to $i^\ast\NN$. If the leaf space is $\R$, then $\phi_1(\{\ast\}\times(0,1))$ must meet each leaf in at most one point. Then the Trivialization Lemma~\ref{l:trivialization} shows that the saturation of $\phi_1(\{\ast\}\times(0,1))$ is diffeomorphic to a product $L\times(0,1)$.
Then the process of completing $\Omega_1$ to $\hat\Omega_1$
shows that the product $L\times(0,1)$ extends to a product
$L\times[0,1)$, so the boundary leaf corresponding to $L\times \{0\}$
must be diffeomorphic to $Y$, but this is a contradiction since leaves diffeomorphic to $Y$ have to be interior leaves of $\Omega$.
\end{proof}

Since $\Omega_1$ cannot fiber over the line, it must fiber over
the circle, but this is just the conclusion of Proposition~\ref{prop1}, so its proof is complete. The rest of this section is devoted to proving Theorem~\ref{mainthm}. It will be a quick corollary of Proposition~\ref{prop1} but first we have to introduce some terminology.

Let $U$ be a saturated open set of a $C^{1,0}$ foliation $\FF$ on a compact manifold and let $L\subset U$ be a leaf. For a Dippolito decomposition (Theorem~\ref{t:octopus}) $\hat{U}=K\cup B_1\cup\dots\cup B_n$, let us denote a complete transversal (homeomorphic to $[0,1]$) of each arm $B_i$ by $T_i$. 

\begin{defn}
Let $U$ be a saturated open set of a $C^{1,0}$ foliation $\FF$ on a compact manifold and let $L\subset U$ be a leaf. Then $L$ is said to be {\em trivial at infinity} for $U$ if there exists a Dippolito decomposition $\hat{U}=K\cup B_1\cup\dots\cup B_n$, and total transversals $T_i$ of each $B_i$, such that $L\cap T_i$ consists of fixed points for every element of the total holonomy group associated to that arm.
\end{defn}

The next theorem is a consequence of the so called {\em generalized Kopell lemma\/} for foliations which can be found in \cite{Cantwell-Conlon2}
and in (\cite{Candel-Conlon}, Proof of Theorem 8.1.26). We thank the first referee for suggesting this argument.

\begin{thm}\cite{Cantwell-Conlon2}\label{t:Cantwell-Conlon2}
Let $\FF$ be a transversely oriented codimension one $C^2$ foliation on a compact manifold.  If $L$ is a proper leaf then it is trivial at infinity for every saturated open set containing $L$.
\end{thm}

From this theorem we can deduce the following.

\begin{lemma} \label{l:infinitely many ends}
Suppose $\FF$ is a transversely oriented codimension one $C^{2}$ foliation on a compact manifold. Let $U$ be an open saturated set. If  $\hat{U}$ is non-compact and $L\subset U$ is a proper leaf such that $\lim L $ contains a non-compact manifold of $\partial\hat{U}$, then $L$ has infinitely many ends.\end{lemma}
\begin{proof}
By the above Theorem~\ref{t:Cantwell-Conlon2}, $L$ is trivial at infinity, so there exists a Dippolito decomposition $\hat{U}=K\cup B_1\cup\dots\cup B_n$ such that all the points of $L\cap T_i\neq\emptyset$ are fixed points for the total holonomy group in the arm $B_i$ for every $i$. Since $\lim(L)$ contains a non-compact boundary leaf, some $L\cap T_i$ must consist of infinitely many points, since it contains a discrete set converging to a boundary leaf. Since all of these points are fixed, it follows that $L\cap B_i$ is a disjoint union of infinitely many copies of the base manifold of that arm. Each one of these copies defines a different end for the leaf $L$. 
\end{proof}

\subsection{Proof of Theorem~\ref{mainthm}}
Let $L$ be a leaf diffeomorphic to some $Y\in\mathcal{Y}$ and suppose that $\FF$ is $C^2$. By Proposition~\ref{prop1}, $L$ is proper and the set $\Omega_1$ (the connected component of the union of the leaves diffeomorphic to $L$ that contains $L$) is an open, connected, and saturated set fibering over the circle. Then the limit of $L$ in $\hat{\Omega}_1$ is non-empty (by Proposition~\ref{p:non-R} the total holonomy group on each arm is non trivial, so transverse accumulation points do exist) and consists of non-compact leaves of $\partial\hat{\Omega}_1$ (by the same argument as in the proof of Lemma~\ref{l:non-compact}). Thus the hypothesis of Lemma~\ref{l:infinitely many ends} is satisfied and $L$ must have infinitely many ends in contradiction to the fact that every manifold in $\mathcal{Y}$ has finitely many ends.
\qed


\section{Final comments}

It is possible to enlarge the family $\mathcal{Y}$ if, instead of working with ends homeomorphic to $S^3\times [0,\infty)$, we allow admissible ends (in the sense of Definition 1.3 in \cite{Taubes}) that are simply connected. We avoid working with this generality in the interests of readability.

Recall that Proposition~\ref{prop1} says that if $Y\in\mathcal Y$ is diffeomorphic to a leaf, then it is a proper leaf without holonomy contained in an open saturated set which consists of leaves diffeomorphic to $Y$ and fibers over the circle. Let us consider as in \cite{Ghys1} the map $h:\Omega_1\to\Omega_1$ which maps each point $x\in L'\subset\Omega_1$ to the first return point $h(x)\in L'$ along the transverse foliation $\NN$ in the negative direction. This is well defined globally because each leaf has a neighborhood bifoliated as a product and the leaf space is the circle. This is the monodromy map which is an orientation-preserving automorphism of our exotic manifold $L$. We saw in the description of exotic structures that self-diffeomorphisms of $\mathbf{R}\in \mathcal{R}\cup\overline{\mathcal{R}}$ are in some sense rigid; this allows us to say what kind of monodromy is admissible at the present state of the art. For instance, the compact set $\mathbf{K}_t\subset\mathbf{R}$ with $t>r_{\psi_{\mathbf{R}}}$ must meet the nucleus of any octopus decomposition of $\Omega_1$ and it seems likely that the monodromy should have fixed points.

As far as we know, this work gives the first insight into the problem of realizing exotic structures on open $4$-manifolds as leaves of a foliation in a compact manifold. We express our hopes in the following conjecture, which we are far from proving, since it includes the higher codimension case and lower regularity assumptions, which are not treated in this paper. It is a goal for future research.

\begin{conj}
No open $4$-manifold with an isolated Taubes like end is diffeomorphic to a leaf of a $C^{1,0}$ foliation of arbitrary codimension in a compact manifold. \label{conj}
\end{conj}

The corresponding conjecture is in fact open for all the known families (of ends) of exotic $\R^4$'s but, as a consequence of this work, those which are Taubes like are the best candidates. Another interesting question is what can be said if we allow infinitely many ends, since in this case Lemma~\ref{l:infinitely many ends} will not work. For instance $S^4$ minus a Cantor set is diffeomorphic to a leaf of a $C^\omega$ codimension $1$ compact manifold (observe that it is the universal covering space of a compact smooth $4$-manifold with fundamental group isomorphic to $\Z\ast\Z$, so any suspension of an analytic action of that group on $S^1$ does the job\footnote{there are lots of such actions, see e.g. \cite{Navas}.}).
It would be interesting to exhibit smooth structures on this manifold which are not diffeomorphic to leaves. This question will be treated in a forthcoming work of the authors.

Let us say something about small exotica (those that embed as open sets in the standard $\R^4$). Small exotica are more interesting from a physical point of view since they support Stein structures (see e.g. \cite{Gompf-Stipsicz}). There is a Taubes-type theorem for them based on the work of DeMichelis and Freedman \cite{DeMichelis-Freedman} and with more generality in \cite{Taylor2}, but unfortunately it does not seem to be enough to prove a version of Lemma~\ref{l:non-compact}. In addition, there is no known ``Taylor index" invariant and therefore the first part of our arguments, which shows that the leaf must be a proper leaf, fails for small exotica, although it works for punctured simply connected $4$-manifolds obtained by removing finitely many points from closed manifolds not homeomorphic to $S^4$, since for these manifolds the argument is purely topological.

It is also worth noting that if the smooth $4$-dimensional Poincar\'e conjecture is false then it is easy to produce exotic $\R^4$'s which are leaves of a transversely analytic (in particular  $C^{2}$) foliation. Consider
$S^4 \times S^1$ with the product foliation, where $S^4$ has an exotic smooth structure, and insert a Reeb component along a
transverse curve, for example $\{*\}\times S^1$. This can easily be done so as to preserve the transverse analyticity. The leaves would be exotic $\R^4$'s with a standard smooth structure at the end.

Finally we include a last remark. Recent work of J. \'Alvarez L\'opez and R. Barral Lij\'o \cite{Alvarez-Barral} states that every Riemannian manifold with bounded geometry can be realized isometrically as a leaf in a compact foliated space. It is known \cite{Greene} that every smooth manifold supports such a geometry, so it follows as a corollary that every smooth manifold is diffeomorphic to a leaf in a compact foliated space. In particular this holds for any exotic $\R^4$. However the transverse topology of this foliated space would in general be far from being a manifold. Anyway, this gives us some hope of finding an explicit description of exotic structures by using finite data: the tangential change of coordinates of a finite foliated atlas.

\end{document}